\documentclass[11pt]{amsart}

\usepackage[T1]{fontenc}
\usepackage{lmodern}
\usepackage[a4paper,margin=1in]{geometry}
\usepackage{amsmath,amssymb,mathtools}
\usepackage{booktabs}
\usepackage{array}
\usepackage{enumitem}
\usepackage{microtype}
\usepackage{xcolor}
\usepackage{aliascnt}
\usepackage{hyperref}
\usepackage[nameinlink,noabbrev]{cleveref}
\usepackage{tikz-cd}
\usetikzlibrary{arrows.meta,calc,matrix}

\hypersetup{
  colorlinks=true,
  linkcolor=blue!55!black,
  citecolor=green!40!black,
  urlcolor=blue!65!black,
  pdftitle={Remarks on the Complex Structure on S2 times S4},
  pdfauthor={Ruiming Liang, Chenhan Liu, Yang Zhang}
}

\setlist[itemize]{leftmargin=2em}
\setlist[enumerate]{leftmargin=2.3em}
\allowdisplaybreaks
\newtheorem{theorem}{Theorem}[section]
\newaliascnt{proposition}{theorem}
\newtheorem{proposition}[proposition]{Proposition}
\aliascntresetthe{proposition}
\newaliascnt{lemma}{theorem}
\newtheorem{lemma}[lemma]{Lemma}
\aliascntresetthe{lemma}
\newaliascnt{corollary}{theorem}
\newtheorem{corollary}[corollary]{Corollary}
\aliascntresetthe{corollary}
\newaliascnt{definition}{theorem}
\newtheorem{definition}[definition]{Definition}
\aliascntresetthe{definition}
\newaliascnt{remark}{theorem}
\newtheorem{remark}[remark]{Remark}
\aliascntresetthe{remark}

\newcommand{\Z}{\mathbb Z}

\newcommand{\R}{\mathbb R}

\newcommand{\Pp}{\mathbb P}

\newcommand{\Pic}{\operatorname{Pic}}
\newcommand{\Bl}{\operatorname{Bl}}

\title{\textbf{Remarks on the Complex Structures on $\mathbb P^3$ and $S^2\times S^4$}}

\author[R. Liang]{Ruiming Liang}
\address[R. Liang]{Peking University}
\email{hiltonliang@stu.pku.edu.cn}
\author[C. Liu]{Chenhan Liu}
\address[C. Liu]{Tsinghua University}
\email{liu-ch22@mails.tsinghua.edu.cn}
\author[Y. Zhang]{Yang Zhang}
\address[Y. Zhang]{Rutgers University}
\email{yang.zhang18@rutgers.edu}

\begin{document}

\begin{abstract}
Assuming the validity of the recently proposed \textit{``A compact complex threefold fibred by tori over the projective line, and the
six-sphere''}, we construct an exotic complex structure on $\mathbb P^{3}$, distinct from the point-blowup structures of Huckleberry, Kebekus, and Peternell. Then we perform an Atiyah flop to produce a complex structure on the standard smooth manifold $S^{2}\times S^{4}$.
\end{abstract}



\setcounter{tocdepth}{1}
\maketitle
\tableofcontents

\section{Introduction}

After the case of $S^{4}$ had been treated by Ehresmann and Hopf \cite{Ehresmann,Hopf}, Borel and Serre, in 1953, proved that a positive-dimensional sphere $S^{n}$ admits an almost complex structure if and only if $n=2$ or $n=6$
\cite[\S15, Proposition~15.1]{BorelSerre}. Subsequently, in 1965, Sutherland \cite[Theorem~3.1]{Sutherland}  showed that a product of even-dimensional spheres is almost complex exactly when it is a product of copies of\[ S^2,\qquad S^6, \qquad S^2\times S^4.\]

Recently, a compact complex threefold $X_{\mathrm{sph}}$ was constructed in \cite[Construction~6.1 and Theorem~6.2]{S6Paper} and it is proved there that
\[X_{\mathrm{sph}}\cong_{\mathrm{diff}} S^{6}\]
as an oriented smooth manifold \cite[Theorem~8.1]{S6Paper}. The aim of this paper is to prove that $S^2\times S^4$ has a complex structure, see \cref{thm:Xplus-main}. Consequently, subject to the result of \cite{S6Paper}, every product of even-dimensional spheres which admits an almost complex structure also admits a complex structure.

Now we state our construction. \cite{S6Paper} also proves that $\mathrm{Aut}^0 (X_{\mathrm{sph}})\cong \mathbb C^{*}$
\cite[Proposition~9.23]{S6Paper}, and the fixed locus of $\mathrm{Aut}^0 (X_{\mathrm{sph}})$ is a rational curve \[C\cong\mathbb P^{1},\] where the two complex normal weights are $+1$ and $-1$
\cite[Proposition~9.24]{S6Paper}; every nontrivial finite subgroup has
the same fixed locus \cite[Remark~9.25]{S6Paper}.

It is useful to recall heuristically how this fixed curve sits in the singular fibre $W$.  The normalization of $W$ is the degree-six del Pezzo surface $dP_6$, whose toric boundary is a hexagon of $(-1)$-curves. $W$ is obtained by identifying each side with its opposite side, which produces three rational double curves 
\[C,\qquad D_-,\qquad D_-'.\] These three curves pass through two triple points $ Q$ and $ Q'$ \cite[Theorem~4.5 and Proposition~4.6]{S6Paper}. The $\mathrm{Aut}^0 (X_{\mathrm{sph}})$-action restricted on $W$ is the one-parameter subgroup associated with one edge direction. The double curve in that direction is fixed pointwise; the other two double curves are preserved and the $\mathbb C^{*}$-action on each curve is the rotation fixing $P$ and $Q$. On either of them the complement of $P$ and
$Q$ is a single free $\mathbb C^{*}$-orbit
\cite[Proposition~9.24 and Remark~9.25]{S6Paper}.  Thus the local picture at either triple point may be visualized as
\[
 W=\{xyz=0\},
 \qquad
 \lambda\cdot(x,y,z)=(\lambda x,\lambda^{-1}y,z).
\]
The $z$-axis is the fixed double curve, while the other two coordinate axes are invariant curves along which the action moves points toward one triple point or the other.

As proved in Proposition \ref{prop: Q3exotic}, \[\widetilde X:=\operatorname{Bl}_{C}(X_{\mathrm{sph}})
\]is a complex manifold diffeomorphic to $Q_3:=\left\{[z_0:\cdots:z_4]\in\mathbb P^4\;\middle|\;z_0^2+\cdots+z_4^2=0\right\}$. We will show in Proposition~\ref{prop: Q3exotic} that
$\widetilde X$ is not biholomorphic to $Q_3$.

Let $\mu_2$ be the (unique) subgroup of $\mathrm{Aut}^0 (X_{\mathrm{sph}})\cong \mathbb C^{*}$ of order $2$, and $\iota\in\mu _2$ be the nontrivial element.
The action of $\iota$ on both normal directions to $C$ is multiplication
by $-1$. 
Blowing up the fixed curve
and taking the quotient gives a smooth complex manifold
\[
 X_{-}:=\operatorname{Bl}_{C}(X_{\mathrm{sph}})/\mu _2.
\]

We shall prove that $X_{-}$ is diffeomorphic to $\mathbb P^{3}$. On the other hand, the construction of Huckleberry, Kebekus, and Peternell \cite{HKP}, when applied to $X_{\mathrm{sph}}$, produces an exotic complex structure on $\mathbb P^{3}$ by blowing up a point of $X_{\mathrm{sph}}$. The following theorem distinguishes our complex structure from theirs.

\begin{theorem}\label{thm:Xminus-main}
The complex manifold $X_{-}$ is orientation-preservingly
diffeomorphic to $\mathbb P^{3}$. Moreover, it is neither biholomorphic nor deformation equivalent to the standard complex structure on
$\mathbb P^{3}$ and the point-blowup complex structures on
$\mathbb P^{3}$ obtained from $X_{\mathrm{sph}}$.
\end{theorem}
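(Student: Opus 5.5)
We plan to prove the topological statement first and then separate the three complex structures by their first Chern class. The first Chern class is a deformation invariant, so this one computation handles both ``biholomorphic'' and ``deformation equivalent''.

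\emph{Topology.} By the results of \cite{S6Paper} recalled above, the fixed locus of $\iota$ on $X_{\mathrm{sph}}\cong_{\mathrm{diff}}S^6$ is exactly $C\cong\mathbb P^1$, and $\iota$ acts on the normal bundle $N_{C}$ by $-1$. Let $q\colon\widetilde X=\operatorname{Bl}_C X_{\mathrm{sph}}\to X_-$ be the quotient map and $E=\mathbb P(N_C)$ the exceptional divisor.
\begin{enumerate}
\item Since $\iota$ acts on the fibres of $N_C$ as a scalar, it acts trivially on $E$. Locally the quotient is $\operatorname{Bl}_0\mathbb C^2/\pm1\times\mathbb C$, i.e.\ the total space of $\mathcal O(-2)$ times a disc. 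Hence $X_-$ is smooth and $q$ is a double cover branched along $E$, with image divisor $E'$ satisfying $q^*E'=2E$.
\item Euler characteristics give $\chi(\widetilde X)=2-2+4=4$ and $\chi(X_-)=(4+4)/2=4$.
\item $X_-$ is simply connected. Indeed $X_{\mathrm{sph}}\setminus C$ is simply connected because $C$ has real codimension $4$. A loop in $X_-\setminus E'$ therefore lifts to a path between $\iota$-related points. Such a path can be contracted once we pass through $E'$: a small loop around $E'$ lifts to half a meridian, and every loop in $X_-$ is homotopic to one through $E'$. So $\pi_1(X_-)=1$.
\item We compute the cohomology by Mayer--Vietoris for $X_-=\nu(E')\cup (X_{\mathrm{sph}}\setminus\nu C)/\iota$. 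Here $\nu(E')$ is a disc bundle over a Hirzebruch surface, and the free quotient $(S^6\setminus \nu S^2)/\iota$ is homotopy equivalent to $S^3/\pm1=\mathbb{RP}^3$ joined appropriately; it is cleanest to use the explicit model $S^6\subset\R^3\oplus\R^4$ with $\iota=(1,-1)$. We aim to show $H^*(X_-;\Z)\cong\Z[h]/h^4$ with $h^3=1$ in the complex orientation, $w_2=0$, and $p_1=4h^2$.
\item Wall--Jupp's classification of simply connected closed $6$-manifolds with torsion-free homology then yields an orientation-preserving diffeomorphism $X_-\cong\mathbb P^3$.
\end{enumerate}

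\emph{First Chern class.} By the blowup formula, $K_{\widetilde X}=\beta^*K_{X_{\mathrm{sph}}}+E$ (codimension $2$ center). By Hurwitz for the branched double cover, $K_{\widetilde X}=q^*K_{X_-}+E$. Since $H^2(S^6)=0$, this gives $q^*c_1(X_-)=0$ in $H^2(\widetilde X;\Z)$. As $q^*$ is injective on $H^2(\,\cdot\,;\mathbb Q)$ and $H^2(X_-;\Z)\cong\Z$ is torsion free, we get $c_1(X_-)=0$. Note also $c_1(X_-)\equiv w_2=0\pmod 2$, which is consistent. For comparison:
\begin{itemize}
\item $c_1(\mathbb P^3)=4h\neq0$;
\item for the point blowup $\operatorname{Bl}_pX_{\mathrm{sph}}$, $K=\beta^*K+2E_p$, so $c_1=-2[E_p]=\pm2h\neq0$.
\end{itemize}
Integral Chern classes are locally constant in smooth families, and any diffeomorphism acts on $H^2\cong\Z$ by $\pm1$. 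Hence $c_1=0$ rules out both biholomorphism and deformation equivalence with either structure. (Alternatively, for $\mathbb P^3$ alone, one could argue that $X_-$ is not in Fujiki class $\mathcal C$, since otherwise $X_{\mathrm{sph}}$ would be, which is impossible with $b_2=0$. The Chern class argument is uniform and also treats the HKP structures.)

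\emph{Main obstacle.} We expect the hardest step to be the integral cohomology ring and Pontryagin class computation in item 4. In particular we must check that the generator $h$, which should be Poincar\'e dual to a line-like class, satisfies $h^3=1$ rather than $2$. The class $[E']$ may well be $2h$ rather than a generator, and $h^3$ must be pinned down with the correct orientation. We would first try the explicit $\R^3\oplus\R^4$ model: identify $S^6/\iota$ as the join-type space $S^2 * \mathbb{RP}^3$ and resolve its singular stratum, comparing with the standard description of $\mathbb P^3$ as $S^2\times\mathbb{CP}^1$-like resolution of $S^6/\iota$. Failing that, we would compute linking forms directly via the double cover, using $q_*q^*=2$.
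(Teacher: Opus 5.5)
\textbf{Gap: the Wall invariants of $X_-$ are never computed.}

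Your complex-analytic half is the paper's argument and is fine. The formula $K_{\widetilde X}=\beta^*K_{X_{\mathrm{sph}}}+E=q^*K_{X_-}+E$ gives $c_1(X_-)=0$, which you then compare with $4u$ and $-2e_p$ using local constancy of $c_1$ in families.

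The gap is in the topological half. The diffeomorphism $X_-\cong_{\mathrm{diff}}\Pp^3$ rests entirely on three inputs: $H^2(X_-)\cong\Z$ torsion-free (your $c_1$ argument needs this too), $h^3=1$, and $\langle p_1(X_-)\cup h,[X_-]\rangle=4$. Your proposal only announces these as targets and then defers them to the ``main obstacle''. They cannot be guessed. Wall's congruence only gives $p(h)\equiv 4h^3\pmod{24}$. For fixed $F^3$, whether $[F]$ is primitive or divisible by $2$ changes $h^3$ by a factor of $8$.

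The route you suggest is not available either. Nothing tells you that the $\mu_2$-action on $X_{\mathrm{sph}}\cong_{\mathrm{diff}}S^6$ is smoothly conjugate to the linear involution of $\R^3\oplus\R^4$. Even in that model, the smooth type of $\Bl_C(S^6)/\mu_2$ depends on the complex structure of the normal bundle of $C$, through $\deg\mathcal N_{C/X_{\mathrm{sph}}}$, which controls $E^3$; a purely real model does not see this.

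The paper needs no model. It computes $H_*(A)$ for $A=(X_{\mathrm{sph}}\setminus C)/\mu_2$ by the Cartan--Leray spectral sequence. This uses only two facts: $\iota$ acts trivially on $H_3(X_{\mathrm{sph}}\setminus C)$, and $H^k(A)=0$ for $k>6$, which forces $d_4\neq 0$. It then runs Mayer--Vietoris against a tube $B$ around $F$. Two lemmas feed this: the inclusion of $P=\partial B\cong S^2\times\R P^3$ into $A$ is an isomorphism on $\pi_1$ and on the free part of $H_3$ (a transfer argument), and $H_2(P)\to H_2(B)$ has torsion-free cokernel.

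\textbf{The missing ideas for the invariants.}

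First, $E^3=-\deg\mathcal N_{C/X_{\mathrm{sph}}}=2$. The paper shows $\mathcal N_{C/X_{\mathrm{sph}}}\cong\mathcal O(-1)^{\oplus 2}$ from the cusp fibre, though the degree $-2$ already follows from $c_1(X_{\mathrm{sph}})=0$ and $C\cong\Pp^1$.

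Second, the double cover itself produces a line bundle $\mathcal L$ with $q_*\mathcal O_{\widetilde X}=\mathcal O_{X_-}\oplus\mathcal L^{-1}$ (the trace splitting), $\mathcal L^{\otimes 2}\cong\mathcal O_{X_-}(F)$ and $q^*\mathcal L\cong\mathcal O_{\widetilde X}(E)$. Hence $2\ell^3=E^3=2$, so $\ell^3=1$. This shows at once that $\ell$ generates $H^2(X_-)$ and that $[F]=2\ell$, which settles your worry.

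Third, $p_1$ is evaluated on $F\cong_{\mathrm{diff}}S^2\times S^2$. From $TX_-|_F\cong TF\oplus\mathcal N_{F/X_-}$ and the signature theorem, $\langle p_1(X_-)\cup[F],[X_-]\rangle=3\operatorname{sign}(F)+F^3=0+8$, so $p_1(X_-)\cdot\ell=4$. Alternatively, $\chi(X_-,\mathcal L^{-1})=\chi(\widetilde X,\mathcal O_{\widetilde X})-\chi(X_-,\mathcal O_{X_-})=0-0$; Riemann--Roch with $c_1(X_-)=0$ then gives $c_2\cdot\ell=-2$, hence $p_1(X_-)\cdot\ell=-2c_2\cdot\ell=4$.

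With these inputs your outline becomes the paper's proof. Without them, your step~5 has nothing to feed into Wall's theorem.
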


Let $C_-$ be the image of $(\mathrm{Bl}_C)_*^{-1}(D_-)$ in $X_-$. Then $C_-$ is a smooth rational curve with normal bundle $\mathcal O(-1)\oplus\mathcal O(-1)$ (\cref{lem: flop-curve}). Thus, we can apply an Atiyah flop to $C_-$ to obtain a complex manifold
\[
 X_{+}:=\operatorname{Flop}_{C_{-}}(X_{-}).
\]

\begin{theorem}\label{thm:Xplus-main}
The complex manifold $X_{+}$ is diffeomorphic to $S^{2}\times S^{4}$.
\end{theorem}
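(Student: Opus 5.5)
The plan is to track the smooth topology through the flop via surgery. The Atiyah flop replaces a neighbourhood of $C_-\cong S^2$ with normal bundle $\mathcal O(-1)\oplus\mathcal O(-1)$ by a neighbourhood of the flopped curve $C_+$. Both neighbourhoods are the total space of this bundle over $S^2$. Topologically, this is the standard surgery which removes $S^2\times D^4$ and glues in $D^3\times S^3$. The underlying real rank-$4$ bundle $\mathcal O(-1)^{\oplus 2}$ has $w_2=0$, so it is trivial. Hence a tubular neighbourhood is $S^2\times D^4$ with boundary $S^2\times S^3$, which is the link of the conifold. Both small resolutions of the conifold are $S^2\times D^4$ glued along the same boundary, but the identifications differ. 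This is the classical fact that a flop is a surgery on $S^2\times S^3$ exchanging the roles of the two factors up to the orientation twist.

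Concretely, $X_+=(X_-\setminus \nu C_-)\cup_{S^2\times S^3} (S^2\times D^4)'$. Equivalently, both $X_-$ and $X_+$ are small resolutions of the nodal variety $X_0$ obtained by contracting $C_-$.

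By \cref{thm:Xminus-main} we know $X_-\cong_{\mathrm{diff}}\mathbb P^3$. We first compute the class $[C_-]\in H_2(X_-;\Z)\cong\Z$. I expect \cref{lem: flop-curve}, or an intersection computation with the exceptional divisor $E/\mu_2$ and the image of $W$, to show that $C_-$ is a line, i.e. $[C_-]$ is the generator. Smooth embeddings of $S^2$ in a $6$-manifold in a fixed homotopy class are isotopic (Haefliger/Whitney, since $2\cdot 2+2<6$). Moreover, framings of the trivial normal bundle are classified by $\pi_2(SO(4))=0$. So the surgery data is determined up to isotopy by the homology class, and it suffices to perform the same surgery on a standard line $L\subset\mathbb P^3$.

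The standard model is now explicit. Removing a tubular neighbourhood of a line from $\mathbb P^3$ leaves a neighbourhood of the dual line, via the join decomposition $\mathbb P^3=\nu L\cup\nu L'$. Thus $\mathbb P^3=S^2\times D^4\cup_\phi D^4\times S^2$ along $S^2\times S^3$, where the gluing $\phi$ is the Hopf-type identification, and the flop replaces the first piece by $D^3\times S^3$ or by the other resolution piece. The key step, and the main obstacle, is to identify the new gluing map correctly. In the conifold picture the two small resolutions are both $S^2\times D^4$ glued to the complement by maps differing by the swap of the two $S^2$ directions in $S^2\times S^3\cong \partial(\text{conifold neighbourhood})$.

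I would argue via the standard local model on the $\mathbb C^*$-side. There $\mathbb P^3$ minus a line is the bundle $\mathcal O(1)^{\oplus 2}\to\mathbb P^1$, and the flop recombines it into $\mathbb P(\mathcal O\oplus\mathcal O(1)\oplus\ldots)$-type pieces. More robustly, I would compute invariants and then apply classification. First, $X_+$ is simply connected, since van Kampen applies with simply connected pieces. Second, $H_2(X_+)\cong\Z$ and $H_3=0$, by Mayer--Vietoris, with the flop killing nothing in $H_2$ but changing the cup product. Third, the cubic form $x^3$ on $H^2$ changes under a flop by $(D\cdot C)^3$, so from $\mathbb P^3$ (form $1$) with $D\cdot C_-=1$ we get form $0$. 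Fourth, $w_2$ and $p_1$ change in a controlled way: $p_1$ changes by a multiple of the dual of $C$, giving $p_1(X_+)=0$ and $w_2=0$. Finally, Wall--Jupp classification of simply connected $6$-manifolds with torsion-free homology via $(H^2,\text{cubic form},w_2,p_1)$ yields $X_+\cong S^2\times S^4$.
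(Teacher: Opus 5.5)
\textbf{There is a genuine gap: the Pontryagin computation is asserted, not proved.} Your working route is the paper's: compute the Wall invariants of $X_+$, then apply Wall's theorem. Your cubic rule $D_+^3=D^3-(D\cdot C_-)^3$ is correct and is the same computation the paper does on $\widehat X$. The trouble is the $p_1$ step. Saying that $p_1$ ``changes by a multiple of the dual of $C$'' says nothing here, because $H^4(X_\pm)\cong\Z$ is generated by that dual. All the content lies in the value of the multiple.

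Admissibility does not supply that value. Once $\ell_+^3=0$ and $X_+$ is spin, Wall's congruence only gives $p_{X_+}(\ell_+)\in 24\Z$. Every system $(0,\Z,0,24m)$ with $m\neq 0$ is realized by a manifold that is not diffeomorphic to $S^2\times S^4$. So you must actually show $p_{X_+}(\ell_+)=p_{X_-}(\ell)-4(\ell\cdot C_-)=4-4=0$.

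The paper does this by Riemann--Roch. Since $c_1=0$ on both sides, $24\chi(X_-,\mathcal L)=4\ell^3-p_{X_-}(\ell)$, and likewise on $X_+$. Moreover $\chi(X_+,\mathcal L_+)=\chi(X_-,\mathcal L)$, because the sequence $0\to\operatorname{Bl}_{C_-}^*\mathcal L\to\operatorname{Bl}_{C_+}^*\mathcal L_+\to\mathcal O_R(0,-1)\to 0$ has acyclic cokernel. Alternatively, take the blow-up formula $p_1(\widehat X)=\operatorname{Bl}_{C_\pm}^*p_1(X_\pm)+[R]^2-2\operatorname{Bl}_{C_\pm}^*\mathrm{PD}[C_\pm]$, valid for both contractions, and pair it with $\operatorname{Bl}_{C_+}^*\ell_+=\operatorname{Bl}_{C_-}^*\ell+[R]$.

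\textbf{Two further missing justifications.} Both routes above, and your cubic formula, need $\ell\cdot C_-=1$, which you only ``expect''. It holds because $\widetilde D_-$ meets $E$ transversally in the two points over $Q,Q'$. Hence $2\,F\cdot C_-=F\cdot q_*[\widetilde D_-]=q^*F\cdot\widetilde D_-=2E\cdot\widetilde D_-=4$, and $[F]=2\ell$ gives $\ell\cdot C_-=1$. Likewise, $w_2(X_+)=0$ needs a reason. For example, $C_\pm$ has real codimension four, so $H^2(X_\pm;\Z/2)$ injects into $H^2$ of the common complement $X_\pm\setminus C_\pm$, where $w_2$ vanishes.

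\textbf{The surgery discussion should be dropped.} Removing $S^2\times D^4$ and gluing in $D^3\times S^3$ is surgery on the $2$-sphere. That is the topology of a conifold transition, not of a flop. It would kill $[C_-]$ in $H_2$ and create $H_3$, contradicting your own later (correct) claim that $H_2$ is unchanged. A flop instead reglues another copy of $S^2\times D^4$ along $S^2\times S^3$.

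The reduction to a standard line is also not justified. The regluing is built from the complex structure of $\mathcal N_{C_-/X_-}\cong\mathcal O(-1)^{\oplus 2}$, which has $c_1=-2$. A line in the standard $\mathbb P^3$ has $c_1(\mathcal N)=+2$. So knowing the isotopy class and real framing of the sphere does not place you in the join model. In any case, the new gluing map, which you yourself call the main obstacle, is never identified.
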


\begin{corollary}
    If $X\cong_{\mathrm{diff}} \prod_i S^{2n_i}$ is a product of even-dimensional spheres, then $X$ admits an almost complex structure if and only if $X$ admits a complex structure.
\end{corollary}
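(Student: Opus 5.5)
The corollary reduces to combining the classification of almost complex products of even spheres with explicit complex structures on each allowed factor. The direction ``complex implies almost complex'' is immediate: the multiplication by $i$ on each tangent space of a complex manifold is an integrable almost complex structure. For the converse I will invoke Sutherland's theorem \cite[Theorem~3.1]{Sutherland}, as recalled in the introduction. It says that if $X\cong_{\mathrm{diff}}\prod_i S^{2n_i}$ is almost complex (with positive-dimensional factors; zero-dimensional factors only produce disjoint unions and I will either exclude them by convention or treat them componentwise), then $X$ is diffeomorphic to a product
\[
X\cong_{\mathrm{diff}} (S^2)^{a}\times (S^6)^{b}\times (S^2\times S^4)^{c}
\]
for some nonnegative integers $a,b,c$. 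One point needs care here: Sutherland's statement is about the multiset of sphere dimensions. I must check that regrouping factors into pairs $S^2\times S^4$ is just a reordering of factors in a product, hence a diffeomorphism, so no subtlety arises.

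Next I will exhibit a complex structure on each building block. For $S^2$, take $\mathbb P^1$. For $S^6$, take $X_{\mathrm{sph}}$ from \cite[Theorem~8.1]{S6Paper}; this relies on the standing assumption of the paper. For $S^2\times S^4$, take $X_+$ from \cref{thm:Xplus-main}. Since a product of complex manifolds is a complex manifold, with holomorphic charts given by products of charts, the product
\[
(\mathbb P^1)^{a}\times X_{\mathrm{sph}}^{\,b}\times X_+^{\,c}
\]
is a compact complex manifold diffeomorphic to $X$. Transporting the structure along this diffeomorphism gives a complex structure on $X$.

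The only real content is \cref{thm:Xplus-main} itself together with the cited result on $S^6$. The main obstacle in writing the corollary carefully is pinning down exactly what Sutherland's theorem asserts. I need to confirm that ``product of copies of $S^2$, $S^6$, $S^2\times S^4$'' means that the list of factors can be partitioned into such blocks, and not merely that some factors are of these types. I will also check that orientation issues are irrelevant, since the diffeomorphism type is all that is required.
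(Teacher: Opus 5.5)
Your proposal is correct and matches the paper's argument. The paper gives it in the introduction rather than as a separate proof: Sutherland's classification reduces everything to products of the blocks $S^2$, $S^6$ and $S^2\times S^4$, these carry the complex structures $\mathbb P^1$, $X_{\mathrm{sph}}$ (conditional on \cite{S6Paper}) and $X_+$ from \cref{thm:Xplus-main}, and products of complex manifolds are complex. Your reading of Sutherland's statement as a partition of the sphere factors into such blocks is the intended one.
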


\begin{remark}

Our construction can be summarized by the following diagram:
\[
\begin{tikzcd}[
    row sep=0.8cm,
    column sep=0.75cm
]
&
\begin{array}{c}
\llap{$Q_3\cong_{\mathrm{diff}}\;$}\widetilde X
\end{array}
\arrow[dl, "\operatorname{Bl}_{C}"']
\arrow[dr, "/\mu_2"]
&&
\widehat X
\arrow[dl, "\operatorname{Bl}_{C_-}"']
\arrow[dr, "\operatorname{Bl}_{C_+}"]
&
\\
\begin{array}{c}
\llap{$S^6\cong_{\mathrm{diff}}\;$}X_{\mathrm{sph}}
\end{array}
&&
\begin{array}{c}
\llap{$\mathbb P^3\cong_{\mathrm{diff}}\;$}X_-
\end{array}
\arrow[rr, dashed, "\mathrm{Atiyah\ flop}"]
&&
\begin{array}{c}
X_+\rlap{$\;\cong_{\mathrm{diff}}S^2\times S^4$}
\end{array}
\end{tikzcd}
\]

\end{remark}

\subsection*{The strategy of proof}

By Wall's classification \cite{Wall}, a compact orientable simply connected spin six-dimensional differentiable manifold $Y$ with torsion-free cohomology groups can be distinguished by the cubic form $H^2(Y)\to H^6(Y):x\mapsto x\cup x\cup x$ and the first Pontryagin functional $p_{Y}(x):=\langle p_{1}(Y)\cup x,[Y]\rangle $. Hence, we only need to compute these topological invariants of $X_-$ and $X_+$ and compare them to those of $\mathbb P^3$ and $S^2\times S^4$ respectively.

To show that the complex structure obtained is an exotic one, we compute the first Chern class of $\widetilde X$ (or $X_-$) and compare them to the first Chern class of the standard complex structures on $Q_3$ (or $\mathbb P^3$ and exotic $\mathbb P^3$ obtained by blowing up one point on $X_\mathrm{sph}$ respectively).\\

The paper is organized as follows.
\begin{itemize}
 \item In Section~\ref{sec:Xminus-topology}, we construct $X_{-}$ as
    the $\mu_{2}$-quotient of the blow-up of $X_{\mathrm{sph}}$ along
    the fixed curve $C$, and determine its fundamental group and integral
    homology.

    \item In Section~\ref{sec:Xplus-topology}, we identify the distinguished
    curve $C_{-}\subset X_{-}$, define $X_{+}$ by performing the Atiyah
    flop along $C_{-}$, and transport the topological calculations across
    the common blow-up.

    \item In Section~\ref{sec:wall-invariants}, we compute the cubic form,
    the second Stiefel--Whitney class, and the first Pontryagin functional
    for both $X_{-}$ and $X_{+}$.

    \item In Section~\ref{Proof of the main theorems}, we apply
    Wall's classification theorem on certain six-manifolds.  We first identify the underlying
    oriented smooth manifold of $X_{-}$ with $\mathbb P^{3}$, and show
    that its complex structure is biholomorphically distinct from both the
    standard structure and the structure arising from the point-blow-up
    construction.  We then identify $X_{+}$ with
    $S^{2}\times S^{4}$, thereby obtaining the asserted complex structure
    on $S^{2}\times S^{4}$.

    \item In Section~\ref{Further discussion}, we give some further discussions.

\end{itemize}

\subsection*{Acknowledgment}
The first author would like to thank Professor Gang Tian for discussions. The third author is indebted to Professor Chi Li for his encouragement and help suggestions.

\subsection*{Statement on the use of AI}
At first, GPT 5.6 Sol used the fibration method similar to \cite{S6Paper} to construct a complex manifold $X$ fibreed over $\mathbb{P}^1$ and claimed that it is homeomorphic to $S^2\times S^4$. However, the proof provided by GPT 5.6 Sol is cumbersome, incomplete, and misses an essential lemma. Then by comparing the singular fibre of $X$ with the singular fibres of $X_{\mathrm{sph}}$, we discovered that that $X$ is likely to be obtained by the blowup-quotient-flop process introduced in this paper, which led to the main \cref{thm:Xplus-main} of this article. Wall’s classification result is also introduced to the authors by GPT 5.6 Sol.

The proofs of \cref{lem:normal-trivial}, \cref{lem: flop-curve}, \cref{prop:Xplus-p1} are first generated by GPT 5.6 Sol, but it is checked and rewrote by the authors.

This article is also polished by GPT 5.6 Sol on grammars, language, notations, conventions, and some graphs.

\subsection*{Notations and conventions}
Unless otherwise stated, all manifolds are smooth, connected, and oriented, and all homology and cohomology groups have integral coefficients. For a complex manifold $Y$, we write $c_i(Y):=c_i(TY)$, and similarly for $p_i(Y)$ and $w_i(Y)$. If $Z\subseteq Y$ is a smooth complex submanifold, then $\mathcal N_{Z/Y}$ denotes its holomorphic normal bundle and $\operatorname{Bl}_Z(Y)$ denotes the blow-up of $Y$ along $Z$.

If $D$ is a Cartier divisor on $Y$, then $[D]:=c_1\bigl(\mathcal O_Y(D)\bigr)\in H^2(Y )$,
whereas $[Y]$ denotes the fundamental class of an oriented closed manifold. Numerical expressions such as $D^3$ mean
$\langle[D]^3,[Y]\rangle$. The notation $\cong_{\mathrm{diff}}$ denotes
an orientation-preserving diffeomorphism, and $\mathbb P^n$ denotes complex projective $n$-space.

The global geometric notation is summarized in the following table; symbols used only within a single calculation are defined at their first occurrence.
\begin{center}
\small
\renewcommand{\arraystretch}{1.15}
\begin{tabular}{@{}>{$}l<{$}@{\qquad}p{0.66\textwidth}@{}}
\toprule
\text{Notation} & \text{Meaning}\\
\midrule
X_{\mathrm{sph}} & the compact complex threefold serving as the source of the construction;\\
W,\ E_0,\ \nu & the cusp fibre, its normalization $E_0\simeq dP_6$, and the normalization map $\nu\colon E_0\to W$;\\
C,\ D_-,\ D_-',\  Q,\  Q' & the fixed double curve, the other two double curves of $W$, and the two triple points;\\
\mu_2=\{1,\iota\} & the order-two subgroup of $\operatorname{Aut}^0(X_{\mathrm{sph}})\cong\mathbb C^*$;\\
\widetilde X,\ E & $\widetilde X:=\operatorname{Bl}_C(X_{\mathrm{sph}})$ and its exceptional divisor;\\
q,\ X_-,\ F & the quotient map $q\colon\widetilde X\to X_-$ and its branch divisor $F=q(E)$;\\
\widetilde D_-,\ C_- & the strict transform of $D_-$ in $\widetilde X$ and its image $C_-=q(\widetilde D_-)$;\\
\widehat X,\ R & $\widehat X:=\operatorname{Bl}_{C_-}(X_-)$ and its exceptional divisor;\\
X_+,\ C_+ & the Atiyah flop of $X_-$ and the image of $R$ in $X_+$;\\
\mathcal L,\ \ell & the branch half-line bundle on $X_-$ and its class $\ell=c_1(\mathcal L)$;\\
\mathcal L_+,\ \ell_+ & the corresponding line bundle on $X_+$ and its class $\ell_+=c_1(\mathcal L_+)$;\\
Q_3 & the smooth quadric threefold in $\mathbb P^4$.\\
\bottomrule
\end{tabular}
\end{center}

\section{The topology of \texorpdfstring{$X_{-}$}{Xminus}}\label{sec:Xminus-topology}

\subsection{The construction of $X_-$}
\label{sec:source-resolution}

Near a point of $C$ there are holomorphic coordinates $(z,x,y)$, with
$z$ tangent to $C$, in which $\lambda\in \mathrm{Aut}^0 (X_{\mathrm{sph}})\cong \mathbb C^{*}$ acts as
\[
 \lambda\cdot(z,x,y)=(z,\lambda x,\lambda^{-1}y).
\]
Hence, the element
$\iota=-1\in\mathbb C^{*}$ acts as $\iota\cdot(z,x,y)=(z,-x,-y)$. Let
\[
 \mathrm{Bl}_C\colon\widetilde X=\operatorname{Bl}_{C}(X_{\mathrm{sph}})
 \longrightarrow X_{\mathrm{sph}}
\]
be the blow-up, with exceptional divisor $E$. In the blow-up
chart $y=xt$, the lifted involution $\widetilde \iota$ is
\[
 \widetilde \iota\cdot (z,x,t)=(z,-x,t).
\]
Its quotient has smooth coordinates $(z,s=x^{2},t)$. Hence, the action of $\widetilde \iota$ fixes $E$ pointwise and the quotient map
\[
 q\colon\widetilde X\longrightarrow
 X_{-}
\]
is a holomorphic double cover branched along the smooth divisor
\[
 F:=q(E).
\]

\subsection{The homology group of $X_-$}

We now compute the homology of $X_-$. Let \[M:=X_{\mathrm{sph}}\setminus C,\qquad A:= M/\mu_2\cong X_-\setminus F. \]
The homology groups of $M$ can be obtained by Alexander duality. Since $\mu_2$ acts freely on $M$, we can compute the homology groups of $A$ by the Cartan-Leray spectral sequence. Let $B$ be an open tubular neighborhood of the branch divisor $F\cong \mathbb P^1\times \mathbb P^1$ in $X_-$.
The homology of $B$ is clear, as $B$ is homotopy equivalent to $S^2\times S^2$. Then we can show $A\cap B\simeq S^2\times \mathbb{R}P^3$, $ A\cup B=X_-$ and use the MV sequence to compute the homology of $X_-$.

\begin{lemma}\label{lem:exterior}
The manifold $M=X_{\mathrm{sph}}\setminus C$ is simply connected and
\[
H_{k}(M)=
\begin{cases}
\Z,&k=0,3,\\
0,&\text{otherwise}.
\end{cases}
\]
Moreover, $\iota$ acts trivially on $H_{3}(M)$.
\end{lemma}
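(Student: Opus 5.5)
We use that $X_{\mathrm{sph}}\cong_{\mathrm{diff}} S^6$ \cite[Theorem~8.1]{S6Paper} and that $C\cong\mathbb P^1$ is a smoothly embedded $2$-sphere in it. The proof then has three parts: the homology of $M$ from Alexander duality, simple connectivity from general position, and the action of $\iota$ on $H_3(M)$ from the linking sphere of $C$.

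\textbf{Homology.} Alexander duality in $S^6$ gives
\[
\widetilde H_k(S^6\setminus C)\cong \widetilde H^{5-k}(C).
\]
Since $C\simeq S^2$, the only nonzero reduced cohomology is $\widetilde H^2(C)=\Z$. Hence $\widetilde H_k(M)=\Z$ for $k=3$ and $0$ otherwise. Together with $H_0(M)=\Z$ (connectedness, see below), this is the asserted homology.

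\textbf{Simple connectivity.} $C$ has real codimension $4\ge 3$ in the simply connected manifold $S^6$. By transversality, every loop and every null-homotopy disk in $S^6$ can be pushed off $C$, since $1+2<6$ and $2+2<6$. So the inclusion induces isomorphisms $\pi_0(M)\to\pi_0(S^6)$ and $\pi_1(M)\to\pi_1(S^6)$, and $M$ is connected and simply connected.

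\textbf{The action of $\iota$ on $H_3(M)$.} Under Alexander duality, a generator of $H_3(M)$ is the linking sphere of $C$: the boundary $S^3_p$ of a small normal $4$-disk $D^4_p$ at a point $p\in C$. Concretely, $H_3(M)\cong H_4(S^6,M)\cong H_4(\nu C,\partial\nu C)$ by excision and Thom isomorphism, and $S^3_p$ is the image of the fibre class. Since $\iota$ fixes $C$ pointwise, $\iota(p)=p$. In the local coordinates $(z,x,y)$ of \cref{sec:source-resolution}, $\iota$ acts on the normal fibre $\{z=z_p\}$ by $(x,y)\mapsto(-x,-y)$. This is the restriction of the connected $\mathbb C^*$-action, hence a complex-linear map isotopic to the identity, and in particular orientation-preserving on $D^4_p$. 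It therefore maps $S^3_p$ to itself with degree $+1$, so $\iota_*[S^3_p]=[S^3_p]$.

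A simpler argument gives the same conclusion: $\iota$ lies in the connected group $\mathrm{Aut}^0(X_{\mathrm{sph}})\cong\mathbb C^*$, and each element of $\mathbb C^*$ preserves $C$ (the fixed locus). Hence $\iota|_M$ is homotopic through self-maps of $M$ to the identity, via a path from $1$ to $-1$ in $\mathbb C^*$, and so acts trivially on all of $H_*(M)$.

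The main point needing care is the identification of $X_{\mathrm{sph}}$ with $S^6$ and of $C$ as a tamely (smoothly) embedded sphere, so that Alexander duality and transversality apply. Both follow from the cited results and the fact that $C$ is a complex submanifold.
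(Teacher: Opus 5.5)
Your proof is correct and follows the paper's argument: Alexander duality in $S^6$ for the homology, general position ($2+2<6$) for simple connectivity, and the observation that $\iota$ acts on the meridian $S^3$ as the antipodal map, which has degree $+1$. Your extra remark is also a valid shortcut: every element of $\mathbb C^*$ fixes $C$ and hence preserves $M$, so a path from $1$ to $-1$ in $\mathbb C^*$ homotopes $\iota|_M$ to the identity, giving triviality on all of $H_*(M)$ at once.
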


\begin{proof}
Alexander duality gives
\[
       \widetilde H_{k}(M)
       \cong\widetilde H^{\,5-k}(S^{2}),
\]
which gives the desired homology. A loop in $M$ bounds a disc in
$S^{6}$. After a relative general-position perturbation, the disc is
disjoint from $C$, since $\dim D^{2}+\dim C=4<6$.
Thus $\pi_{1}(M)=0$. 

The generator of $H_{3}(M)$ is represented by a meridian $S^{3}$ around a point in $C$. The involution $\iota$ restricts to the
antipodal map on this $S^{3}$, whose degree is
$(-1)^{3+1}=+1$. Hence, the action of $\iota$ on $H_{3}(M)$ is trivial.
\end{proof}

\begin{proposition}\label{prop:A-homology}
The cohomology groups of $A$ are
\[
H^{k}(A)=
\begin{cases}
\Z,&k=0,3,\\
\Z/2\Z,&k=2,\\
0,&\text{otherwise},
\end{cases}
\]
and the homology groups of $A$ are
\[
H_{k}(A)=
\begin{cases}
\Z,&k=0,3,\\
\Z/2\Z,&k=1,\\
0,&\text{otherwise}.
\end{cases}
\]
\end{proposition}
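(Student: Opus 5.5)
We will compute $H_*(A)$ and $H^*(A)$ from the Cartan--Leray spectral sequence of the free $\mu_2$-covering $M\to A$. By \cref{lem:exterior}, $M$ is simply connected, so this covering is universal. Hence $\pi_1(A)\cong\Z/2\Z$ and $H_1(A)\cong\Z/2\Z$.

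The homology spectral sequence is
\[
E^2_{p,q}=H_p\bigl(\Z/2\Z;H_q(M)\bigr)\Longrightarrow H_{p+q}(A).
\]
By \cref{lem:exterior}, the only nonzero rows are $q=0$ and $q=3$, and on both rows the coefficients are $\Z$ with trivial action. For $q=3$ this is the second statement of \cref{lem:exterior}. Recall that $H_p(\Z/2\Z;\Z)$ is $\Z$ for $p=0$, $\Z/2\Z$ for odd $p$, and $0$ for even $p>0$. The only possible differential is $d^4\colon E^4_{p,0}\to E^4_{p-4,3}$.

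The key input is that $A$ is an open $6$-manifold. Indeed $A=X_-\setminus F$ with $F\neq\emptyset$, so $A$ is a noncompact manifold and $H_k(A)=0$ for $k\ge 6$. Moreover $A$ is orientable, since $X_-$ is a complex manifold. A noncompact connected orientable $n$-manifold also has $H_n(A)=0$ and torsion-free $H_{n-1}(A)$. Since $A$ has the homotopy type of a CW complex, this forces the total degrees $\ge 5$ to contribute nothing, except that $H_5$ must vanish as well.

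This forces the differentials. For $p\ge 5$, $d^4\colon E_{p,0}\to E_{p-4,3}$ must kill everything in total degree $\ge 5$. In particular $d^4\colon E_{5,0}=\Z/2\Z\to E_{1,3}=\Z/2\Z$ must be an isomorphism. Otherwise $E^\infty_{1,3}$ would survive into $H_4(A)$, or $E_{5,0}$ would survive into $H_5(A)$.

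A cleaner way to organize this is to note that the sequence is $4$-periodic: it is the Gysin/transfer pattern of a homology lens-type structure. The $d^4$ is cup product with an Euler-type class in $H^4(\Z/2\Z;\Z)=\Z/2\Z$. Once one such $d^4$ is nonzero, periodicity makes all $d^4\colon E_{p,0}\to E_{p-4,3}$ with $p\ge 5$ isomorphisms wherever both sides are $\Z/2\Z$. For $p=4$, $d^4$ maps $E_{4,0}=0$ to $E_{0,3}=\Z$, so nothing happens there. We can also argue more directly: all groups of total degree $\ge 5$ must die, and the only way is these $d^4$ isomorphisms. In particular $E_{1,3}$ is killed, which gives $H_4(A)=0$.

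What survives in low degrees is then read off directly:
\begin{itemize}
\item $H_0(A)=\Z$;
\item $H_1(A)=\Z/2\Z$;
\item $H_2(A)=E_{2,0}=0$;
\item $H_3(A)$ has a filtration with quotients $E_{3,0}=\Z/2\Z$ and $E_{0,3}=\Z$;
\item $H_4(A)=0$.
\end{itemize}

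To resolve the extension in degree $3$, we use the transfer. The composite of the covering map $H_3(M)\to H_3(A)$ with the transfer is multiplication by $1+\iota_*=2$ on $H_3(M)$. Alternatively, we use that the meridian $S^3$ maps to the linking $\mathbb{R}P^3$-type class. Concretely, the meridian $S^3$ around $C$ double covers the fibre $\mathbb{R}P^3$ of the link of $F$ in $A$, and the $\mathbb{R}P^3$ fundamental class generates $H_3(A)$. The $\Z/2\Z$ in $E_{3,0}$ is then absorbed as $\Z\xrightarrow{2}\Z$, giving the nontrivial extension $0\to\Z\to H_3(A)\to\Z/2\Z\to 0$ with $H_3(A)\cong\Z$. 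Justifying this extension (equivalently, that $H_3(A)$ is torsion-free) is the step I expect to be the main obstacle. My first attempt will be the noncompact-manifold argument again, applied via duality $H_3(A)\cong H^3_c(A)$ together with the long exact sequence of the pair $(X_-,F)$.

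Finally, the cohomology follows from universal coefficients:
\[
H^k(A)\cong \operatorname{Hom}(H_k(A),\Z)\oplus\operatorname{Ext}(H_{k-1}(A),\Z).
\]
This gives $\Z$ in degrees $0$ and $3$, $\operatorname{Ext}(\Z/2\Z,\Z)=\Z/2\Z$ in degree $2$, and $0$ otherwise.
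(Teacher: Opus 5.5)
Your computation of all the $E^\infty$-terms is correct, and your route differs usefully from the paper's. You use the homological Cartan--Leray sequence and pin down every $d^4\colon E_{p,0}\to E_{p-4,3}$ with $p\ge5$ using only that $A$ is an open $6$-manifold ($H_{\ge6}(A)=0$ and $H_5(A)$ torsion-free), with no multiplicative structure. The cost is that in homology, total degree $3$ carries two surviving terms. The whole content of the proposition then sits in the extension
\[
0\longrightarrow E^\infty_{0,3}\cong\Z\longrightarrow H_3(A)\longrightarrow E^\infty_{3,0}\cong\Z/2\Z\longrightarrow 0,
\]
which no differential touches, and your proposal leaves it unresolved. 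The identity $\operatorname{tr}\circ p_*=1+\iota_*=2$ holds whether $H_3(A)$ is $\Z$ or $\Z\oplus\Z/2\Z$, so it decides nothing. The sentence ``the $\R P^3$ fundamental class generates $H_3(A)$'' is the claim itself, not an input. Your fallback through $H_3(A)\cong H^3_c(A)\cong H^3(X_-,F)$ is circular, because $H^*(X_-)$ is computed in \cref{prop:Xminus-homology} from this very proposition. In the paper's cohomological sequence the same information appears as a differential, $d_4\colon E_4^{0,3}\to E_4^{4,0}$; indeed $H^4(A)=\operatorname{Ext}(H_3(A),\Z)$ once $H_4(A)=0$. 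The paper gets its nonvanishing from multiplicativity: $d_4(x)=0$ would force $d_4(u^kx)=0$ for all $k$ and leave every $u^k$ alive.

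You can close the gap with the geometric remark you already made, once it is turned into an argument. The edge homomorphism identifies $E^\infty_{0,3}$ with $p_*H_3(M)=\Z\,p_*(g)$, where $g$ is the meridian class. The meridian $S^3$ double covers the fibre $\{c\}\times\R P^3$ of $P\subset A$, so by naturality $p_*(g)=\pm2\,i_*[\{c\}\times\R P^3]$, which is divisible by $2$ in $H_3(A)$. If $H_3(A)$ had torsion it would be $\Z\oplus\Z/2\Z$. There, any infinite cyclic subgroup of index two is generated by some $(\pm1,\epsilon)$, which is not divisible by $2$. Hence $H_3(A)\cong\Z$, generated by the $\R P^3$ class as in \cref{lem:boundary-maps}. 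Your universal-coefficient step then gives the cohomology table.
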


\begin{proof}
For a free cellular action of a discrete group $G$ on a CW complex
$X$, the cohomological Cartan--Leray spectral sequence is
\[
 E_{2}^{p,q}=H^{p}\!\left(G;H^{q}(X )\right)
 \Longrightarrow H^{p+q}(X/G ).
\]
Here $H^{p}(G;-)$ denotes group cohomology. Apply this with $G=\mathbb Z/2\mathbb Z$ and $X=M$. We obtain
\[
E_{2}^{p,q}
=
H^{p}\bigl(\Z/2\Z;H^{q}(M)\bigr)
\Longrightarrow H^{p+q}(A).
\]
Only the rows $q=0,3$ occur, and both coefficient modules are trivial by \cref{lem:exterior}. The group cohomology $H^*(\mathbb Z/2\mathbb Z; \mathbb Z)$ with trivial module structure can be represented as
\[
H^{*}(\Z/2\Z;\Z)=\Z[u]/(2u),\qquad \deg u=2,
\]
i.e.,
\[
 H^{p}(\mathbb Z/2\mathbb Z; \mathbb Z)=
 \begin{cases}
 \mathbb Z,&p=0,\\
 \mathbb Z/2\mathbb Z,&p>0\text{ even},\\
 0,&p\text{ odd}.
 \end{cases}
\]
Let $x$ generate $H^{3}(M)$.  Symbolically, the two nonzero
rows are
\[
 E_{2}^{*,*}
 =
 \mathbb Z[u]/(2u)
 \oplus x\,\mathbb Z[u]/(2u),
\qquad
 \deg u=(2,0),\qquad \deg x=(0,3).
\]

The differential operators in the spectral sequence are $d_r\colon E_r^{p,q}\longrightarrow E_r^{p+r,q-r+1}$. 
The only possible nonzero differential is therefore $d_{4}\colon E_{4}^{p,3}\longrightarrow E_{4}^{p+4,0}$. 

\begin{equation*}
\begin{tikzpicture}[
  baseline=(current bounding box.center),
  >={Latex},
  every node/.style={font=\scriptsize}
]
\matrix (e4) [
  matrix of math nodes,
  nodes in empty cells,
  row sep=.9em,
  column sep=1.45em,
  row 1/.style={nodes={minimum height=3.2em}}
] {
 q=3 & \mathbb Z & 0 & \mathbb Z/2 & 0 & \mathbb Z/2 & \cdots \\
 q=0 & \mathbb Z & 0 & \mathbb Z/2 & 0 & \mathbb Z/2 & \cdots \\
     & p=0 & 1 & 2 & 3 & 4 & \\
};
\coordinate (vline) at ($(e4-1-1.east)!.5!(e4-1-2.west)$);
\draw[thin]
  (vline |- e4-1-1.north)
  -- (vline |- e4-3-1.south);
\draw[thin]
  ($(e4-2-1.south west)!.5!(e4-3-1.north west)$)
  -- ($(e4-2-7.south east)!.5!(e4-3-7.north east)$);
\draw[->,very thick,black!70!black]
  (e4-1-2) --
  node[midway,above,sloped,fill=white,inner sep=1pt] {$d_4$}
  (e4-2-6);
\end{tikzpicture}
\end{equation*}

We claim that $d_4$ at $p=0$ is a nonzero homomorphism
\[
       \mathbb Z\longrightarrow\mathbb Z/2\mathbb Z:d_{4}(x)=u^{2}.
\]
If $d_4(x)=0$, multiplicativity would give
\[
       d_{4}(u^{k}x)=u^{k}d_{4}(x)=0
\]
for every $k$. Hence, all the classes $u^{k}$ would survive in
arbitrarily large total degrees. This is impossible because $H^k(A)$ must be $0$ when $k>6$. Thus $d_4(x)=u^{2}$. Multiplicativity gives
\[
 d_4(u^{p}x)=u^{p+2}.
\]
For every $p\geq1$, this is an isomorphism
$E_{4}^{p,3}\to E_{4}^{p+4,0}:\mathbb Z/2\to\mathbb Z/2$.  It follows that $u$ survives in degree $2$, every $u^{p}$ with $p\geq 2$ is killed, and every positive-column class in the upper row is killed.  The surviving subgroup
at $(p,q)=(0,3)$ is
\[
       \ker\bigl(\Z\xrightarrow{\bmod 2}\Z/2\bigr)=2\Z\cong\Z.
\]
Thus the only nonzero $E_{\infty}$-terms are
\[
 E_{\infty}^{0,0}=\mathbb Z,\qquad
 E_{\infty}^{2,0}=\mathbb Z/2\mathbb Z,\qquad
 E_{\infty}^{0,3}=2\mathbb Z\cong\mathbb Z.
\]
This proves the cohomology statement.  Notice that the
surviving class in degree three restricts to twice a generator of
$H^{3}(M )$.

We finally recover homology by universal coefficient theorem. Since $M$ is a double cover of $A$, we have $\pi_{1}(A)\cong\Z/2\Z$, and so
\[
 H_{1}(A )=\mathbb Z/2\mathbb Z.
\]
The universal coefficient sequence in degree two is
\[
 0\longrightarrow
 \operatorname{Ext}(H_{1}(A),\mathbb Z)
 \longrightarrow H^{2}(A )
 \longrightarrow\operatorname{Hom}(H_{2}(A),\mathbb Z)
 \longrightarrow0.
\]
The first two groups are both $\mathbb Z/2\mathbb Z$, so
$\operatorname{Hom}(H_{2}(A),\mathbb Z)=0$.  The degree-three and
degree-four universal coefficient sequences show respectively that
$\operatorname{Ext}(H_{2}(A),\mathbb Z)=0$ and that $H_{3}(A)$ has
no torsion.  As all homology groups are finitely generated, this forces
\[
 H_{2}(A)=0,\qquad H_{3}(A)=\mathbb Z.
\]
Continuing in degrees four, five, and six gives
\[
 H_{4}(A)=H_{5}(A)=0
\]
and shows that $H_{6}(A)$ is torsion. Since $A$ is homotopy equivalent to a compact,
connected, oriented six-manifold with nonempty boundary,
$H_{6}(A )=0$.
This proves the homology table.
\end{proof}

Next, we compute the homology groups of $B$ and the intersection $A\cap B$.

By \cite[Proposition~9.24 and Remark~9.25]{S6Paper}, the fixed locus of $\iota$ is a smooth curve $C=X_{\mathrm{sph}}^{\iota}\cong \mathbb P^1$, 
and $\iota$ acts by $-I$ on the normal bundle
$\mathcal{N}_{C/X_{\mathrm{sph}}}$.

\begin{lemma}\label{lem:normal-trivial}
We have $\mathcal{N}_{C/X_{\mathrm{sph}}}\cong \mathcal{O}(-1)\oplus \mathcal{O}(-1)$. In particular, as an oriented real rank-four bundle, the normal bundle $\mathcal{N}_{C/X_{\mathrm{sph}}}\to C\cong_{\mathrm{diff}} S^{2}$ is trivial.
\end{lemma}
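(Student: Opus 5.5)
The plan is to pin down the normal bundle from the $\mathbb C^*$-action, then get the degrees from a global topological constraint. The normal bundle is a rank-two holomorphic bundle on $C\cong\mathbb P^1$, so by Grothendieck it splits as $\mathcal O(a)\oplus\mathcal O(b)$.

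First, I will use the $\mathrm{Aut}^0(X_{\mathrm{sph}})\cong\mathbb C^*$-action. It fixes $C$ pointwise and acts on $\mathcal N_{C/X_{\mathrm{sph}}}$ fibrewise with weights $+1$ and $-1$ (\cite[Proposition~9.24]{S6Paper}). Hence the normal bundle decomposes holomorphically into weight subbundles, $\mathcal N=\mathcal N_{+1}\oplus\mathcal N_{-1}$, each a line bundle. This is consistent with the local coordinates $(z,x,y)$ recalled in \cref{sec:source-resolution}.

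Second, I will compute the total degree $a+b=\deg\mathcal N$ via adjunction: $\deg\mathcal N=-K_{X_{\mathrm{sph}}}\cdot C-2$. I would read off $c_1(X_{\mathrm{sph}})\cdot C$ from the fixed-point data. By Bott residue (holomorphic Lefschetz) for the $\mathbb C^*$-action with fixed locus exactly $C$, the equivariant localization of $c_1$ and $c_3$ involves only the weights $\pm1$ and the degrees $a,b$. The integral $\int c_3=\chi(S^6)=2$ gives one relation, and the vanishing of $\int c_1^3$ and $\int c_1c_2$ (both forced since $H^2(S^6)=0$) gives others. This is the step I expect to be the main obstacle.

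In case localization is awkward, I will use a fallback. Consider the $\mathbb C^*$-orbits: the closures of orbits through points of $D_-$ and $D_-'$ limit onto $C$ in the $+1$ and $-1$ directions. Equivalently, use the Białynicki-Birula-type decomposition near $C$: the attracting set of $C$ is a surface whose normal direction is $\mathcal N_{+1}$. I would then compute $\deg\mathcal N_{\pm1}$ as minus the self-intersection of $C$ in the corresponding invariant surface (a component of $W$ through $C$, whose normalization is $dP_6$, where the boundary curves are $(-1)$-curves). Since the preimage of $C$ in $E_0$ consists of two opposite $(-1)$-curves of the hexagon, each branch of $W$ along $C$ contributes normal degree $-1$. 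This identifies $\mathcal N_{+1}\cong\mathcal O(-1)$ and $\mathcal N_{-1}\cong\mathcal O(-1)$, and the computation is consistent with the local model $xyz=0$ where the two sheets containing the $z$-axis are $\{x=0\}$ and $\{y=0\}$.

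Finally, the real statement follows. As an oriented real rank-four bundle over $S^2$, the normal bundle is classified by $\pi_1(SO(4))=\Z/2$, detected by $w_2=c_1\bmod 2$. Here $c_1=-2\equiv0$, so the bundle is trivial.
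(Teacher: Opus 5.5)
Your primary route cannot give the holomorphic splitting type, and this is a structural limitation rather than a hard computation. Localizing along $C$ with equivariant Chern roots $2h$, $\lambda+ah$, $-\lambda+bh$ (where $h^2=0$), you get $\int c_3=2$ and $\int c_1^3=0$ for all $a,b$, and $\int c_1c_2=2+a+b$. Only first order in $h$ survives, and the expression is symmetric under exchanging the two weight summands. So every Chern number depends on $(a,b)$ only through $a+b$, and none of them separates $\mathcal O(a)\oplus\mathcal O(b)$ from $\mathcal O(a+k)\oplus\mathcal O(b-k)$.

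The one relation you do obtain, $a+b=-2$, needs no localization. Adjunction gives $\deg\mathcal N_{C/X_{\mathrm{sph}}}=c_1(X_{\mathrm{sph}})\cdot C-2=-2$, because $H^2(S^6)=0$. That suffices for the real clause, since $w_2=(a+b)\bmod 2$. It does not give $\mathcal N\cong\mathcal O(-1)\oplus\mathcal O(-1)$, which is what the lemma asserts and what is reused for $D_-$ in the flop-curve lemma.

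If you want a fixed-point proof, use the holomorphic Lefschetz formula for $\mathcal O_{X_{\mathrm{sph}}}$ under $S^1\subset\mathbb C^*$ instead of Chern numbers. Up to relabelling the summands, the contribution of $C$ is $-\lambda\bigl((1+b)\lambda-(1+a)\bigr)/(\lambda-1)^3$. The left side $\sum_q(-1)^q\operatorname{tr}(\lambda\mid H^{0,q})$ is a Laurent polynomial in $\lambda$, so the pole at $\lambda=1$ must cancel, and this forces $a=b=-1$.

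Your fallback is therefore the actual proof, and it is the paper's argument. The two local branches of $W$ along $C$ are the images of the opposite hexagon edges $C_v,C_{-v}\subset E_0\simeq dP_6$. Each is a $(-1)$-curve mapping isomorphically onto $C$. The normal-crossings models $xy=0$ and $xyz=0$ show that their normal lines span $\mathcal N_{C/X_{\mathrm{sph}}}$ at every point of $C$, including $Q$ and $Q'$.

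The only difference is how you get the splitting. You use the $\mathbb C^*$-weights: each branch is invariant because the connected group preserves each component of $\nu^{-1}(C)$, and an invariant line in a fibre with distinct weights must be an eigenline. The paper instead writes an extension and uses $\operatorname{Ext}^1(\mathcal O(-1),\mathcal O(-1))=0$. Both work, but the paper's version transfers verbatim to $D_-$, which $\mathbb C^*$ does not fix pointwise.

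One slip: $\deg\mathcal N_{C/S}$ equals the self-intersection $(C\cdot C)_S=-1$, not minus it. Your stated conclusion $\mathcal N_{\pm1}\cong\mathcal O(-1)$ is the correct one.
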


\begin{proof}
Following the notation in \cite[Proposition~9.24]{S6Paper}, let
\[
 \nu\colon E_{0}\simeq dP_{6}\longrightarrow W
\]
be the normalization of the cusp fibre. By
\cite[Lemma~4.2(iv) and Proposition~4.6(ii)--(iii)]{S6Paper}, the two
branches over $C$ are opposite boundary curves
$C_{v},C_{-v}\subseteq E_{0}$; each map isomorphically to $C$ and is a
$(-1)$-curve. Hence
\[
 \mathcal{N}_{C_{v}/E_{0}}\cong \mathcal{N}_{C_{-v}/E_{0}}
 \cong\mathcal O_{\mathbb P^{1}}(-1).
\]
The normal-crossings equations $xy=0$ along the smooth part of $C$
and $xyz=0$ at its two triple points
\cite[Theorem~4.5(d)]{S6Paper} show that these two branch-normal
directions extend across the triple points and give an exact sequence
\[
 0\longrightarrow\mathcal O(-1)
 \longrightarrow \mathcal{N}_{C/X_{\mathrm{sph}}}
 \longrightarrow\mathcal O(-1)\longrightarrow0.
\]
This sequence splits because
$\operatorname{Ext}^{1}(\mathcal O(-1),\mathcal O(-1))
=H^{1}(\mathbb P^{1},\mathcal O)=0$, proving the holomorphic assertion.
Finally, the underlying real bundle has
$w_{2}=c_{1}(\mathcal{N}_{C/X_{\mathrm{sph}}})\bmod2=(-2)\bmod2=0$, and an oriented
rank-four bundle over $S^{2}$ with vanishing $w_{2}$ is trivial.
\end{proof}

Hence, $F\cong E\cong \mathbb P^1\times \mathbb P^1$. So $\pi_{1}(B)=0$ and 
\[H_{k}(B)=
\begin{cases}
\Z,&k=0,4,\\
\Z^2,&k=2,\\
0,& \text{otherwise}.
\end{cases}
\]

Choose an open neighborhood $N(C)$ of $C$ in $X_{\mathrm{sph}}$ such that
\[N(C)\cong_{\mathrm{diff}} S^{2}\times D^{4},
\qquad
\iota(c,v)=(c,-v).
\]
Then $N(C)\setminus C$ can be identified with a neighborhood of $E$ in $\widetilde X$ minus $E$. Since $\partial N(C)\cong_{\mathrm{diff}} S^{2}\times S^{3}$ and $\iota$ acts on the $S^3$ factor by the antipodal map, we have
\[
     P:=\partial \mathcal{N}_{F/X_-}\cong_{\mathrm{diff}} \partial \mathcal{N}_{C/X_{\mathrm{sph}}}\big{/}\mu_2\cong_{\mathrm{diff}} S^2\times \mathbb RP^3.
\]
Note that $A\cap B\simeq P$, the Künneth theorem gives
\[
H_{k}(A\cap B)\cong H_{k}(P)=
\begin{cases}
\Z,&k=0,2,5,\\
\Z/2\Z,&k=1,\\
\Z\oplus\Z/2\Z,&k=3,\\
0,&k=4.
\end{cases}
\]

\begin{lemma}\label{lem:boundary-maps}
The inclusion $P\hookrightarrow A$ induces:
\begin{enumerate}
  \item an isomorphism on fundamental groups
  \[
       \pi_{1}(P)\xrightarrow{\ \sim\ }\pi_{1}(A),
  \]
  and hence an isomorphism
  \[
       H_{1}(P)\xrightarrow{\ \sim\ }H_{1}(A);
  \]
  \item an isomorphism from the free summand of $H_{3}(P)$ onto
  $H_{3}(A)$.
\end{enumerate}
\end{lemma}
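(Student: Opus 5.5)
The plan is to lift everything to the double covers and argue equivariantly. The inclusion $P\hookrightarrow A$ is covered by the $\mu_2$-equivariant inclusion $\widetilde P:=\partial N(C)\cong S^2\times S^3\hookrightarrow M$, and both coverings $\widetilde P\to P$ and $M\to A$ are the restrictions of the single free $\mu_2$-action by $\iota$.

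\emph{Part (1).} Since $\widetilde P$ is connected, the covering $M\to A$ restricts over $P$ to the connected double cover $\widetilde P\to P$. Therefore the composite
\[
\pi_1(P)\to\pi_1(A)\to \pi_1(A)/\pi_1(M)=\mathbb Z/2\mathbb Z
\]
is the classifying homomorphism of $\widetilde P\to P$. This map is surjective, because the cover is connected. Now $\pi_1(M)=0$ by \cref{lem:exterior}, so $\pi_1(A)\cong\mathbb Z/2\mathbb Z$. We also have $\pi_1(P)\cong\pi_1(\mathbb RP^3)\cong\mathbb Z/2\mathbb Z$. A surjection $\mathbb Z/2\mathbb Z\to\mathbb Z/2\mathbb Z$ is an isomorphism, and abelianizing gives the statement on $H_1$.

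\emph{Part (2).} The free summand of $H_3(P)$ is generated by the fundamental class $[\mathbb RP^3]$ of a fibre $\{c\}\times\mathbb RP^3$, via Künneth. Its preimage in $\widetilde P$ is the meridian sphere $\{c\}\times S^3$, which by the proof of \cref{lem:exterior} generates $H_3(M)\cong\mathbb Z$. The transfer in homology $\tau\colon H_3(A)\to H_3(M)$ therefore sends $[\mathbb RP^3]\mapsto[S^3]$, a generator.

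It remains to identify $H_3(A)\to H_3(M)$ on a generator of $H_3(A)\cong\mathbb Z$ (\cref{prop:A-homology}). Since $\pi_*\circ\tau=2$ and $\tau\circ\pi_*=1+\iota_*=2$ (because $\iota$ acts trivially on $H_3(M)$), both $\tau$ and $\pi_*$ are injective maps $\mathbb Z\to\mathbb Z$ whose product of degrees is $2$. So one of them is an isomorphism and the other is multiplication by $2$.

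Here the cohomology computation pins this down. In the proof of \cref{prop:A-homology} we found that the pullback $H^3(A)\to H^3(M)$ has image $2H^3(M)$. Dually, $\pi_*\colon H_3(M)\to H_3(A)$ is multiplication by $2$ modulo torsion, and $H_3(A)$ has no torsion. Hence $\tau$ is an isomorphism. Since $\tau([\mathbb RP^3])=[S^3]$ is a generator of $H_3(M)$, the class $[\mathbb RP^3]$ is itself a generator of $H_3(A)$. This proves (2).

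The main obstacle is this degree bookkeeping: deciding whether $[\mathbb RP^3]$ maps to a generator or to twice a generator. It has to be settled by combining the transfer identities with the statement $E_\infty^{0,3}=2\mathbb Z$ from the spectral sequence, together with the duality between the cohomology pullback and the homology pushforward via the universal coefficient theorem. One also needs to check that the Künneth free generator of $H_3(S^2\times\mathbb RP^3)$ really is the fibre class. This holds because $H_3(S^2)\otimes H_0$ vanishes and $H_2(S^2)\otimes H_1(\mathbb RP^3)$ is torsion.
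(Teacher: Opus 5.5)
Your proof is correct and follows the paper's argument. Part (1) is the same observation that the restricted double cover $S^2\times S^3\to P$ is connected, so a generator of $\pi_1(P)$ realizes the nontrivial deck transformation. Part (2) rests on the same naturality of transfer for the square of double covers, which sends $[\{c\}\times\mathbb RP^3]$ to the meridian generator of $H_3(M)$.

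Contrary to your closing remark, the detour through $E_\infty^{0,3}=2\mathbb Z$ to show that the transfer is an isomorphism is not needed. Write $i_*\alpha=ka$ and $\tau(a)=mg$. Then $\tau(i_*\alpha)=\pm g$ already gives $km=\pm1$, hence $k=\pm1$, which is exactly how the paper concludes.
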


\begin{proof}
A generator of $\pi_{1}(\R P^{3})$ lifts to a path in $S^{3}$ from
$v$ to $-v$, and therefore represents the nontrivial deck
transformation of $M\to A$. This proves the first assertion.

For the second, let
\[
       \alpha=[\{c\}\times\R P^{3}]
\]
generate the free summand of $H_{3}(P)$, let $a$ generate
$H_{3}(A)$, and let $g$ be the meridian generator of $H_{3}(M)$.
Write
\[
       i_{*}\alpha=ka,\qquad \operatorname{tr}(a)=mg.
\]
Transfer is natural for the square of double covers
\[
\begin{array}{ccc}
S^{2}\times S^{3}&\longrightarrow&M\\
\downarrow&&\downarrow\\
S^{2}\times\R P^{3}&\longrightarrow&A.
\end{array}
\]
The transfer of $\alpha$ is the $S^{3}$-fibre, which maps to
$\pm g$ by the meridian isomorphism. Hence
\[
       km=\pm1.
\]
Therefore $k=\pm1$, proving the assertion.
\end{proof}

\begin{lemma}\label{lem:H2-primitive}
The map
\[
       H_{2}(P)\longrightarrow H_{2}(B)
\]
is injective with torsion-free rank-one cokernel.
\end{lemma}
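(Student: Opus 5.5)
We will compute $H_2(P)\to H_2(B)$ by making both groups explicit through the circle-bundle structure of $P$ over $F$. Since $B$ deformation retracts onto $F\cong\mathbb P^1\times\mathbb P^1$, the map is the pushforward along the sphere-bundle projection
\[
\pi\colon P=\partial\mathcal N_{F/X_-}\longrightarrow F .
\]
We identify $H_2(B)\cong H_2(F)\cong\Z^2$ with basis $[\mathbb P^1\times\mathrm{pt}]$, $[\mathrm{pt}\times\mathbb P^1]$, where the first factor corresponds to $C$ (the base direction) and the second to the $\mathbb P^1$-fibre of $E\to C$.

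The first step is to describe $P$ as a circle bundle over $F$. Under the identification $P\cong S^2\times\R P^3$ coming from $N(C)\cong S^2\times D^4$, the map $\pi$ is $\mathrm{id}_{S^2}$ times the map $\R P^3=S^3/\pm1\to S^3/S^1=\mathbb P^1$. This is the unit circle bundle of $\mathcal O(-2)$ over $\mathbb P^1$: the normal bundle of $F$ in $X_-$ restricted to a fibre $\mathbb P^1$ of $F\to C$ is $\mathcal O(-1)^{\otimes 2}$, because $q$ is a double cover branched along $F$. Since the identification is global, $P\cong S^2\times L(2,1)$ as a bundle over $S^2\times\mathbb P^1$.

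The next step computes $\pi_*$ on $H_2(P)\cong\Z$, which by Künneth is generated by $[S^2\times\mathrm{pt}]$. Its image is $[\mathbb P^1\times\mathrm{pt}]$, a primitive class in $H_2(F)$. Hence $\pi_*$ is injective with cokernel $\Z^2/\Z(1,0)\cong\Z$, which is torsion-free of rank one. As a consistency check, the Gysin sequence gives the same answer: the Euler class of the circle bundle is $e=c_1(\mathcal N_{F/X_-})$, and since $\iota$ acts trivially in the base direction, $e$ pairs to $0$ with the $C$-direction and to $-2$ with the fibre. The kernel of $H_2(F)\to\Z$, $x\mapsto\langle e,x\rangle$, is then $\Z[\mathbb P^1\times\mathrm{pt}]$, which is exactly the image of $\pi_*$ by the Gysin sequence $H_2(P)\to H_2(F)\to H_0(F)$.

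The main obstacle is justifying that the product identification $N(C)\cong S^2\times D^4$ is compatible with the complex Hopf fibration on the $D^4$ factor, so that $\pi$ really is a product map. This holds if the trivialization can be chosen complex-linear, i.e. $\mathcal N_{C/X_{\mathrm{sph}}}$ is trivial as a complex rank-2 bundle. That follows from \cref{lem:normal-trivial}, since complex rank-2 bundles over $S^2$ are classified by $c_1=-2$, which is even, so the bundle is topologically trivial as an $\mathrm{SU}(2)$-reducible $U(2)$ bundle. Alternatively, we can avoid the issue and rely only on the Gysin computation, which needs only $\langle e,[\mathrm{pt}\times\mathbb P^1]\rangle=-2$ and that $e$ is not divisible elsewhere. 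Either route yields injectivity with a primitive image, and hence a torsion-free rank-one cokernel.
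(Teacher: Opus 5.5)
Your main route rests on a false step. A complex rank-two bundle over $S^2$ is classified by $c_1$; its structure group reduces to $SU(2)$ only when $c_1=0$. So $\mathcal N_{C/X_{\mathrm{sph}}}\cong\mathcal O(-1)\oplus\mathcal O(-1)$, which has $c_1=-2\neq 0$, is \emph{not} trivial as a complex bundle. \cref{lem:normal-trivial} only trivializes the underlying oriented real rank-four bundle, and no trivialization $N(C)\cong S^2\times D^4$ can be complex-linear on the fibres. Hence $\pi\colon P\to F$ is not $\mathrm{id}_{S^2}\times(\R P^3\to\mathbb P^1)$, and your Euler class $(0,-2)$ is wrong.

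Write $\mathcal N_{C/X_{\mathrm{sph}}}\cong\mathcal O(-1)\otimes\mathbb C^2$. Then $E\cong C\times\mathbb P^1$ with $\mathcal N_{E/\widetilde X}=\mathcal O_{\mathbb P(\mathcal N)}(-1)\cong\mathcal O(-1,-1)$, so $e=c_1(\mathcal N_{F/X_-})=c_1(\mathcal N_{E/\widetilde X}^{\otimes 2})=(-2,-2)$. It has degree $-2$ in the $C$-direction as well. The paper's numbers force this: $\langle e^2,[F]\rangle=F^3=(2\ell)^3=8$ (see \cref{prop:Xminus-p1}), whereas $(0,-2)$ would give $F^3=0$. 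Consequently the image of $H_2(P)$ in $H_2(F)$ is $\ker\langle e,\cdot\rangle=\Z\bigl([\mathbb P^1\times\mathrm{pt}]-[\mathrm{pt}\times\mathbb P^1]\bigr)$, not $\Z[\mathbb P^1\times\mathrm{pt}]$. That class is also primitive, so the lemma itself survives. But both your product-structure computation and your ``consistency check'' compute the wrong map.

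Your fallback is the right proof, and it is essentially the paper's. The homology Gysin sequence $H_1(F)\to H_2(P)\to H_2(F)\to H_0(F)\to H_1(P)\to H_1(F)$ is exactly the long exact sequence of the pair $(B,P)$, combined with the Thom isomorphisms $H_3(B,P)\cong H_1(F)=0$ and $H_2(B,P)\cong H_0(F)\cong\Z$ that the paper uses. The argument then runs as follows:
\begin{itemize}
\item Injectivity comes from $H_1(F)=0$.
\item The cokernel is isomorphic to the image of $H_2(F)\to\Z$, a subgroup of $\Z$. It is therefore torsion-free automatically, so your condition that ``$e$ is not divisible elsewhere'' is unnecessary.
\item The cokernel is nonzero, hence of rank one. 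You can see this because $e$ has degree $-2$ on a fibre of $F\to C$, which you derived correctly from the branched double cover. The paper instead uses that $\Z\to H_1(P)\cong\Z/2$ is onto, so the image is $2\Z$.
\end{itemize}
Drop the product identification and present only this argument.
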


\begin{proof}
The neighborhood $B$ is the real rank-two disc bundle of the normal
complex line bundle of $F$, with sphere boundary $P$. By the Thom
isomorphism,
\[
       H_{3}(B,P)\cong H_{1}(F)=0,\qquad
       H_{2}(B,P)\cong H_{0}(F)=\Z.
\]
The long exact sequence of the pair contains
\[
0\longrightarrow H_{2}(P)
\longrightarrow H_{2}(B)
\longrightarrow\Z
\longrightarrow H_{1}(P)\cong\Z/2\Z
\longrightarrow0.
\]
The map $\Z\to H_{1}(P)\cong\Z/2\Z$ is surjective, so its kernel is
$2\Z$. It follows that
the first map is injective and its cokernel is isomorphic to
$2\Z\cong\Z$, hence is torsion-free.
\end{proof}

Now we apply the MV-sequence to $X_-=A\cup B$.

\begin{proposition}\label{prop:Xminus-homology}
The manifold $X_{-}=A\cup B$ is simply connected and
\[
H_{k}(X_{-})=
\begin{cases}
\Z,&k=0,2,4,6,\\
0,&k=1,3,5.
\end{cases}
\]
\end{proposition}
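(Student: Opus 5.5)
We argue by Seifert--van Kampen for $\pi_1$ and by the Mayer--Vietoris sequence of the open cover $X_-=A\cup B$ for homology, using $A\cap B\simeq P\cong S^2\times\R P^3$ and the groups computed above.

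\emph{Fundamental group.} Since $B\simeq F\cong\Pp^1\times\Pp^1$, we have $\pi_1(B)=0$. By \cref{lem:boundary-maps}(1), $\pi_1(P)\to\pi_1(A)$ is an isomorphism. Van Kampen then gives $\pi_1(X_-)\cong\pi_1(A)/\langle\!\langle \operatorname{im}\pi_1(P)\rangle\!\rangle=1$, so $X_-$ is simply connected. In particular $H_1(X_-)=0$, and by Poincaré duality and universal coefficients $H_5(X_-)=0$.

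\emph{Mayer--Vietoris.} The sequence is
\[
\cdots\to H_k(P)\xrightarrow{(i_*,j_*)} H_k(A)\oplus H_k(B)\to H_k(X_-)\xrightarrow{\partial} H_{k-1}(P)\to\cdots,
\]
and we go through it degree by degree.

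In degree $6$: $H_6(A)=H_6(B)=0$ and $H_5(P)=\Z$ maps to $H_5(A)\oplus H_5(B)=0$. Hence $H_6(X_-)\cong\Z$, which is consistent with $X_-$ being a closed oriented manifold.

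In degree $4$: we have $H_4(P)=0$, $H_4(A)=0$ and $H_4(B)=\Z$. For $k=4$ the map $H_3(P)=\Z\oplus\Z/2\to H_3(A)\oplus H_3(B)=\Z$ is given on the free summand by the isomorphism of \cref{lem:boundary-maps}(2). Its kernel is therefore the torsion $\Z/2$ (one should check that the torsion class $[S^2\times\R P^1]$-type generator maps to zero in the torsion-free group $H_3(A)$, which is automatic). So $0\to\Z\to H_4(X_-)\to\Z/2\to 0$.

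In degree $3$: $H_3(X_-)$ sits between the cokernel of $H_3(P)\to H_3(A)$, which is $0$, and the kernel of $H_2(P)\to H_2(A)\oplus H_2(B)$, which is $0$ by \cref{lem:H2-primitive}. Hence $H_3(X_-)=0$.

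In degree $2$: $0\to \operatorname{coker}\bigl(H_2(P)\to H_2(B)\bigr)\to H_2(X_-)\to \ker\bigl(H_1(P)\to H_1(A)\oplus H_1(B)\bigr)$. The cokernel is $\Z$ by \cref{lem:H2-primitive} (here $H_2(A)=0$), and the kernel is $0$ by \cref{lem:boundary-maps}(1). So $H_2(X_-)\cong\Z$. In degree $1$ the same injectivity, together with $H_1(B)=0$, reconfirms $H_1(X_-)=0$.

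\emph{Main obstacle.} The one delicate point is degree $4$, where Mayer--Vietoris only gives an extension of $\Z/2$ by $\Z$. Rather than resolve the extension directly, we use duality: $X_-$ is simply connected and $H_3(X_-)=0$, so by Poincaré duality and universal coefficients $H_4(X_-)\cong H^2(X_-)\cong\operatorname{Hom}(H_2(X_-),\Z)\oplus\operatorname{Ext}(H_1(X_-),\Z)=\Z$. Hence $H_4(X_-)=\Z$ is torsion-free, the extension is forced to be $0\to\Z\xrightarrow{2}\Z\to\Z/2\to0$, and this completes the table.
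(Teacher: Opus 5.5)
Your proof is correct and takes essentially the same route as the paper. Like the paper, you use van Kampen via \cref{lem:boundary-maps}(1), the Mayer--Vietoris sequence with \cref{lem:boundary-maps}(2) and \cref{lem:H2-primitive} to get $H_3(X_-)=0$ and $H_2(X_-)\cong\Z$, and then Poincar\'e duality with universal coefficients for the remaining degrees. The paper skips your degree-$4$ and degree-$6$ Mayer--Vietoris bookkeeping, whose forced extension $0\to\Z\xrightarrow{2}\Z\to\Z/2\to0$ is consistent with the later relation $[F]=2\ell$. One wording point: your degree-$1$ reconfirmation actually needs the surjectivity of $H_1(P)\to H_1(A)$ and the injectivity of $H_0(P)\to H_0(A)\oplus H_0(B)$, not the injectivity you cite; this is harmless because the lemma gives an isomorphism.
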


\begin{proof}
The van Kampen theorem gives
\[
\pi_{1}(X_{-})
\cong
\pi_{1}(A)*_{\pi_{1}(P)}\pi_{1}(B)=1,
\]
because $\pi_{1}(P)\to\pi_{1}(A)$ is an isomorphism and
$\pi_{1}(B)=1$.

Consider the MV-sequence
\begin{equation}\label{eq:MV-zigzag}
\begin{tikzpicture}[
  baseline=(current bounding box.center),
  >={Latex},
  descr/.style={fill=white,inner sep=1.5pt}
]
\matrix (m) [
  matrix of math nodes,
  row sep=1.4em,
  column sep=2.5em,
  nodes={text height=1.5ex,text depth=0.4ex,inner sep=1.5pt}
]
{
\cdots & H_{3}(A\cap B) & H_{3}(A)\oplus H_{3}(B) & H_{3}(X_{-}) \\
       & H_{2}(A\cap B) & H_{2}(A)\oplus H_{2}(B) & H_{2}(X_{-}) \\
       & H_{1}(A\cap B) & H_{1}(A)\oplus H_{1}(B) & \cdots \\
};
\path[overlay,->,font=\scriptsize]
  (m-1-1) edge (m-1-2)
  (m-1-2) edge (m-1-3)
  (m-1-3) edge (m-1-4)
  (m-1-4) edge[out=355,in=175,black!70!black]
    node[descr,yshift=.3ex] {$\partial$} (m-2-2)
  (m-2-2) edge (m-2-3)
  (m-2-3) edge (m-2-4)
  (m-2-4) edge[out=355,in=175,black!70!black]
    node[descr,yshift=.3ex] {$\partial$} (m-3-2)
  (m-3-2) edge (m-3-3)
  (m-3-3) edge (m-3-4);
\end{tikzpicture}
\end{equation}
Substituting the groups already calculated, this becomes
\[
\mathbb Z\oplus\mathbb Z/2
\xrightarrow{(\,\pm1,\,0\,)}
\mathbb Z
\longrightarrow H_{3}(X_{-})
\longrightarrow
\mathbb Z
\hookrightarrow
\mathbb Z^{2}
\longrightarrow H_{2}(X_{-})
\longrightarrow
\mathbb Z/2
\xrightarrow{\sim}
\mathbb Z/2.
\]
The first arrow is surjective by \cref{lem:boundary-maps}; its torsion
summand necessarily maps to zero.  Hence $H_{3}(X_{-})$ injects into
$H_{2}(P)=\mathbb Z$.  Its image is the kernel of the displayed
injection $\mathbb Z\hookrightarrow\mathbb Z^{2}$, and is therefore
zero.  Thus $H_{3}(X_{-})=0$.

At the other end, the last arrow is an isomorphism, so
$\mathbb Z^{2}\to H_{2}(X_{-})$ is surjective.  Its kernel is the
image of $H_{2}(P)$, and \cref{lem:H2-primitive} identifies the
cokernel of that image with $\mathbb Z$.  Therefore
\[
       H_{3}(X_{-})=0,\qquad H_{2}(X_{-})=\Z.
\]
Simple connectivity gives $H_{1}(X_{-})=0$.  Since $X_{-}$ is a
closed connected oriented six-manifold, $H_{0}(X_{-})=H_{6}(X_{-})=
\mathbb Z$.  The universal coefficient theorem gives
$H^{2}(X_{-})\cong\operatorname{Hom}(H_{2}(X_{-}),\mathbb Z)=
\mathbb Z$ and $H^{1}(X_{-})=0$.  Poincaré duality then gives
\[
       H_{4}(X_{-})\cong H^{2}(X_{-})\cong\Z,\qquad
       H_{5}(X_{-})\cong H^{1}(X_{-})=0,
\]
which completes the table.
\end{proof}

\section{The topology of \texorpdfstring{$X_{+}$}{X+}}\label{sec:Xplus-topology}

\subsection{The construction of
\texorpdfstring{$X_{+}$}{X+}}

For the same reason as \cref{lem:normal-trivial}, the normal bundle of the curves $D_-$ is isomorphic to $\mathcal{O}(-1)\oplus \mathcal{O}(-1)$. Let $\widetilde D_-\subseteq\widetilde X$ be its strict transform by the blow up $\mathrm{Bl}_C$. We claim
that
\begin{lemma}\label{lem: flop-curve}
    The image $C_{-}:=q(\widetilde D_{-})\subseteq X_{-}$ is a smooth rational curve and
    \[\mathcal{N}_{C_-/X_-}\cong\mathcal{O}(-1)\oplus \mathcal{O}(-1).\]
\end{lemma}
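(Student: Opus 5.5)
The plan is to compute the normal bundle of $\widetilde D_-$ in $\widetilde X$ as an explicit sum of two line bundles, and then descend it through the double cover $q$. The parity of degrees under pull-back will then force the answer.

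\emph{Step 1: local picture at the triple points.} The curve $D_-$ passes through the two triple points $Q,Q'\in C$ and meets $C$ transversally there. In the local model $W=\{xyz=0\}$ at $Q$, with $\lambda\cdot(x,y,z)=(\lambda x,\lambda^{-1}y,z)$, we may take $D_-$ to be the $x$-axis and $C$ the $z$-axis; the situation at $Q'$ is analogous. Since $D_-\cap C=\{Q,Q'\}$ consists of transverse points, the strict transform $\widetilde D_-\cong D_-\cong\mathbb P^1$ meets $E$ transversally in two points. Concretely, in the chart $y=xt$ the curve $\widetilde D_-$ is $\{z=t=0\}$, with coordinate $x$. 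By \cref{sec:source-resolution}, $\widetilde\iota$ acts by $(z,x,t)\mapsto(z,-x,t)$. Hence $\widetilde\iota$ preserves $\widetilde D_-$ and acts on it as the involution $x\mapsto -x$, with exactly two fixed points $\widetilde D_-\cap E$. Therefore $q|_{\widetilde D_-}\colon\widetilde D_-\to C_-$ is a double cover branched at two points. In the quotient coordinates $(z,s=x^2,t)$, the image $C_-$ is the smooth $s$-axis, so $C_-\cong\mathbb P^1$ is a smooth rational curve. Moreover, since the normal coordinates $(z,t)$ are $\widetilde\iota$-invariant, we get
\[
\mathcal N_{\widetilde D_-/\widetilde X}\cong q^{*}\mathcal N_{C_-/X_-}
\]
as $\widetilde\iota$-linearized bundles. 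This holds away from $E$ because $q$ is étale there, and near $E$ by the local formula.

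\emph{Step 2: a splitting upstairs.} As in \cref{lem:normal-trivial}, the two local sheets $S_1,S_2$ of $W$ along $D_-$ extend across $Q,Q'$. They give $\mathcal N_{D_-/X_{\mathrm{sph}}}=\mathcal N_{D_-/S_1}\oplus\mathcal N_{D_-/S_2}$ with both summands of degree $-1$. Each sheet is $\mathbb C^*$-invariant, since a connected group preserves local branches, and hence is $\iota$-invariant. Now pass to strict transforms in $\widetilde X$. At a triple point where a sheet contains $C$ (the plane $\{y=0\}$ above), the normal direction $z$ is unchanged. Where the sheet is transverse to $C$ (the plane $\{z=0\}$), the normal direction $y$ is replaced by $t=y/x$, which lowers the degree by one. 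Thus $\mathcal N_{\widetilde D_-/\widetilde X}=L_1\oplus L_2$ with $\deg L_1+\deg L_2=-4$ and each $\deg L_i\in\{-1,-2,-3\}$. The subbundles $L_i$ are $\widetilde\iota$-invariant.

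\emph{Step 3: descent and parity (the main point).} Each invariant line subbundle $L_i\subset q^*\mathcal N_{C_-/X_-}$ descends to a line subbundle $L_i'\subset\mathcal N_{C_-/X_-}$. This uses that the linearization is trivial on fibres over the branch points, which the invariance of $z,t$ gives. It follows that $L_i=q^*L_i'$, so $\deg L_i=2\deg L_i'$ is even. The constraints of Step 2 then force $\deg L_1=\deg L_2=-2$, hence $\deg L_1'=\deg L_2'=-1$. We obtain an exact sequence $0\to\mathcal O(-1)\to\mathcal N_{C_-/X_-}\to\mathcal O(-1)\to0$, which splits because $H^1(\mathbb P^1,\mathcal O)=0$.

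The step I expect to need the most care is the descent claim: checking that invariant subbundles of $q^*\mathcal N$ really are pull-backs near the two branch points. If this becomes delicate, I would instead argue directly from the hexagon combinatorics of $dP_6$. The aim there is to show that at $Q$ and at $Q'$ exactly one of the two sheets through $D_-$ contains $C$, and that these are different sheets. This gives $(-2,-2)$ without any parity argument, after which descent is needed only for the whole rank-two bundle.
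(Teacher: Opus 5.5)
Your proposal is correct and follows the paper's proof. Both show that $\mathcal N_{\widetilde D_-/\widetilde X}$ is a degree $-4$ elementary modification of $\mathcal N_{D_-/X_{\mathrm{sph}}}\cong\mathcal O(-1)\oplus\mathcal O(-1)$ at $Q,Q'$, so its splitting type is $(-2,-2)$ or $(-1,-3)$, and both exclude $(-1,-3)$ by parity via the identification $\mathcal N_{\widetilde D_-/\widetilde X}\cong q^{*}\mathcal N_{C_-/X_-}$. The differences are minor: the paper gets parity by pulling back a Grothendieck splitting $\mathcal N_{C_-/X_-}\cong\mathcal O(m)\oplus\mathcal O(n)$ to $\mathcal O(2m)\oplus\mathcal O(2n)$, so your $\iota$-invariance of the sheets and descent of invariant line subbundles (valid, but extra work) are not needed, while your hexagon fallback would give $(-2,-2)$ directly with no parity argument at all.
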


\begin{proof}
Identifying $\widetilde D_{-}$ with $D_{-}$ via the blow up map $b:\widetilde X=\operatorname{Bl}_{C}(X_{\mathrm{sph}})
\rightarrow X_{\mathrm{sph}}$ and blowing up the two
transverse intersection points $ Q', Q'$ gives an elementary exact sequence
\begin{equation*}
0\longrightarrow\mathcal N_{\widetilde D_{-}/\widetilde X}
\longrightarrow\mathcal N_{D_{-}/X_{\mathrm{sph}}}
\longrightarrow\mathbb C_{ Q}\oplus\mathbb C_{ Q'}
\longrightarrow0.
\end{equation*}
According to \cref{lem:normal-trivial}, $\mathcal N_{D_{-}/X_{\mathrm{sph}}}\cong \mathcal O(-1)\oplus \mathcal O(-1)$ .Thus the only possibilities of $\mathcal N_{\widetilde D_{-}/\widetilde X}$ are
\begin{equation}\label{eq:Dminus-two-splittings}
\mathcal O(-2)\oplus \mathcal O(-2)
\qquad\text{or}\qquad
\mathcal O(-1)\oplus\mathcal O(-3).
\end{equation}

We now exclude the second possibility. By \cite[Propositions~9.23--9.24 and Remark~9.25]{S6Paper}, 
\[
q|_{\widetilde D_{-}}\colon\widetilde D_{-}\longrightarrow C_{-}
\]
is a double cover branched at the points over $ Q, Q'$. The quotient is
smooth along $C_{-}$, and Riemann--Hurwitz gives
$C_{-}\cong\mathbb P^{1}$. Moreover, the normal direction coordinates near $\widetilde D_-$ descend unchanged, so $\mathcal N_{\widetilde D_{-}/\widetilde X}\cong\rho^{*}\mathcal N_{C_{-}/X_{-}}$. Write $\mathcal N_{C_{-}/X_{-}}
\cong\mathcal O(m)\oplus\mathcal O(n)$. Since $\deg q|_{\widetilde D_{-}}=2$, we have that
\[
\mathcal N_{\widetilde D_{-}/\widetilde X}
\cong\mathcal O(2m)\oplus\mathcal O(2n).
\]
Both splitting degrees upstairs are therefore even. This excludes
$\mathcal O(-1)\oplus\mathcal O(-3)$ in
\eqref{eq:Dminus-two-splittings}, and hence
\[
\mathcal N_{\widetilde D_{-}/\widetilde X}
\cong\mathcal O_{\mathbb P^{1}}(-2)\oplus \mathcal O_{\mathbb P^{1}}(-2).
\]
Consequently, $m=n=-1$, and the lemma follows.
\end{proof}

Therefore, we can apply the standard (analytic) Atiyah flop to $X_-$.

\begin{definition}\label{def:Xplus-flop}
Let
\[
\widehat X:=\operatorname{Bl}_{C_{-}}X_{-}\longrightarrow X_{-}
\]
be the blow-up, and let $R\subseteq \widehat X$ be its exceptional divisor.  Then
\[
 R\cong\mathbb P^{1}\times\mathbb P^{1},
 \qquad
 \mathcal O_{\widehat X}(R)|_R\cong\mathcal O_R(-1,-1).
\]
Contracting $R$ along another ruling gives a smooth compact complex threefold $X_{+}$ with a curve $C_+$ (the image of $R$) and a morphism
\[
 \mathrm{Bl}_{C_+}:\widehat X\longrightarrow X_{+}.
\]
We call $X_{+}$ the \emph{Atiyah flop} of $X_{-}$ along $C_{-}$.
\end{definition}

The contraction in the definition is the standard analytic Atiyah contraction \cite{Atiyah}. By construction, we have $X_-\setminus C_-=X_+\setminus C_+=\widehat X\setminus R$. \begin{equation}\label{eq:global-flop-diagram}
\begin{tikzcd}[column sep=4.8em,row sep=2.4em]
& \widehat{X}
  \arrow[dl,"{\operatorname{Bl}_{C_-}}"']
  \arrow[dr,"{\operatorname{Bl}_{C_+}}"] & \\
X_- && X_+
\end{tikzcd}
\end{equation}
Both arrows are blow-ups of a smooth rational curve with normal bundle
$\mathcal O(-1)\oplus\mathcal O(-1)$.

\subsection{The homology groups of $X_+$}

It is standard to compute the homology groups across the Atiyah flop. For completeness of the paper, we give a sketch here.

\begin{lemma}\label{lem:integral-blowup}
Let $a\colon\widetilde Y=\operatorname{Bl}_{C}Y\to Y$ be the blow-up
of a complex manifold along a smooth connected complex
codimension-two submanifold. Then
\[
 H^{k}(\widetilde Y )
 \cong H^{k}(Y )\oplus H^{k-2}(C ).
\]
\end{lemma}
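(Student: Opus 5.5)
The plan is to compare $\widetilde Y$ with $Y$ through the excision/Mayer--Vietoris decomposition along the exceptional divisor $E_a:=a^{-1}(C)$, and then split the resulting sequence using the pullback and a Gysin-type map. Write $N$ for the normal bundle of $C$ in $Y$, a rank-two complex bundle. Then $E_a=\Pp(N)\to C$ is a $\Pp^1$-bundle. Its cohomology is given by the Leray--Hirsch theorem as
\[
H^*(E_a)=H^*(C)\oplus \xi H^{*-2}(C),
\qquad \xi=c_1(\mathcal O_{E_a}(1)).
\]
This holds integrally, because $\xi$ restricts to a generator on each fibre.

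The first step is to use the fact that $a$ restricts to a homeomorphism $\widetilde Y\setminus E_a\to Y\setminus C$. Excision with tubular neighbourhoods then gives isomorphisms
\[
H^k(\widetilde Y,E_a)\cong H^k(Y,C),
\]
compatible with $a^*$. This yields a map of long exact sequences of pairs, running from the sequence for $(Y,C)$ to the one for $(\widetilde Y,E_a)$. In the case where $C$ or $Y$ is noncompact, one should instead use closed tubular neighbourhoods and their complements, but the argument is the same.

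The second step is a diagram chase, which gives the exactness of
\[
0\to H^k(Y)\xrightarrow{(a^*,\,\mathrm{res})} H^k(\widetilde Y)\oplus H^k(C)\xrightarrow{\mathrm{res}-a^*} H^k(E_a)\to 0.
\]
Injectivity on the left, and surjectivity on the right, both come from the fact that $a|_{E_a}^*\colon H^k(C)\to H^k(E_a)$ is split injective by Leray--Hirsch. The fibre integration $\pi_*$ provides a left inverse, so every connecting map in this Mayer--Vietoris-type sequence vanishes. Combining this with the Leray--Hirsch decomposition and cancelling the copy of $H^k(C)$ leaves
\[
H^k(\widetilde Y)\cong H^k(Y)\oplus H^{k-2}(C).
\]

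Finally, to make the splitting explicit, I would describe the second summand by the map
\[
\beta\mapsto j_*\bigl(\pi^*\beta\bigr),
\]
where $j\colon E_a\hookrightarrow\widetilde Y$ is the inclusion and $j_*$ is the Gysin pushforward. The main obstacle is checking that this splitting is integral rather than merely rational. For that I would use the identity
\[
j^*j_*\pi^*\beta=\pi^*\beta\cdot c_1(\mathcal O_{E_a}(E_a))=-\xi\,\pi^*\beta,
\]
which says that $j_*\pi^*(\,\cdot\,)$ hits exactly the $\xi H^{k-2}(C)$ summand, up to sign and modulo the image of $H^k(C)$. Together with the projection formula $a_*a^*=\mathrm{id}$, this shows that $a^*$ and $j_*\pi^*$ together give the integral isomorphism asserted in the lemma.
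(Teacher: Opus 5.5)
Your second step has a genuine gap. In your ladder the isomorphic terms are the relative groups $H^k(Y,C)\cong H^k(\widetilde Y,E_a)$, so the derived sequence is
\[
\cdots\to H^{k-1}(E_a)\xrightarrow{\ \Delta\ }H^k(Y)\xrightarrow{(a^*,\,\mathrm{res})}H^k(\widetilde Y)\oplus H^k(C)\xrightarrow{\mathrm{res}-a^*}H^k(E_a)\xrightarrow{\ \Delta\ }H^{k+1}(Y)\to\cdots .
\]
Split injectivity of $\pi^*=(a|_{E_a})^*$ does not force $\Delta=0$. It only shows that $\Delta$ kills $\pi^*H^{k}(C)$, which lies trivially in the image of $\mathrm{res}-a^*$. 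It says nothing about $\Delta$ on the complementary summand $\xi\,\pi^*H^{k-2}(C)$.

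The inference is false as a formal statement. The collapse map $(D^2,S^1)\to(S^2,\mathrm{pt})$ is a relative homeomorphism, and $H^*(\mathrm{pt})\to H^*(S^1)$ is split injective. Yet $\Delta\colon H^1(S^1)\to H^2(S^2)$ is an isomorphism, and the pullback on $H^2$ is zero. Separately, $\pi_*$ is not a left inverse of $\pi^*$, since $\pi_*\pi^*=0$. A left inverse is $x\mapsto\pi_*(\xi x)$, or simply the Leray--Hirsch projection.

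The missing input is geometric. It already appears in your last paragraph, but it is needed here to make the sequence short exact, not merely to make the splitting integral. There are two ways to supply it.
\begin{enumerate}
\item Use $a_*a^*=\mathrm{id}$, which holds because $a$ is proper of degree one (via Borel--Moore homology if $Y$ is noncompact). This makes $a^*$ injective, so $(a^*,\mathrm{res})$ is injective in every degree and every $\Delta$ vanishes by exactness.
\item Use the identity $j^*j_*\pi^*\beta=-\xi\,\pi^*\beta$, which comes from $\mathcal O_{\widetilde Y}(E_a)|_{E_a}\cong\mathcal O_{E_a}(-1)$. It shows directly that $\mathrm{res}-a^*$ is onto.
\end{enumerate}
With either in place the rest is routine. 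Write $j^*x=\pi^*\alpha+\xi\,\pi^*\beta$. Then $x+j_*\pi^*\beta$ restricts to $\pi^*\alpha$, so it lies in $a^*H^k(Y)$ by exactness. This gives $H^k(\widetilde Y)=a^*H^k(Y)\oplus j_*\pi^*H^{k-2}(C)$ integrally, so integrality is not a separate obstacle.

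For comparison, the paper uses the other ladder, the pairs $(Y,Y\setminus C)$ and $(\widetilde Y,\widetilde Y\setminus E)$. There the complements are the isomorphic terms, and the relative terms are the Thom groups $H^{k-4}(C)$ and $H^{k-2}(E)$. The derived sequence then begins $H^{k-4}(C)\to H^k(Y)\oplus H^{k-2}(E)$. So injectivity of the comparison on the exceptional terms (the Thom class of $C$ going to the $\xi$-summand) does kill the connecting maps. That is the logic you were reaching for, but it works only in the paper's ladder, not in yours. The geometric content is the same in both routes: the normal bundle of the exceptional divisor is the tautological line bundle.
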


\begin{proof}
Let $E=\mathbb P(\mathcal{N}_{C/Y})$, let $\pi\colon E\to C$ be the
projection, and let $j\colon E\hookrightarrow\widetilde Y$ be the
inclusion. Excision and the Thom isomorphism compare the long exact
sequences of $(Y,Y\setminus C)$ and
$(\widetilde Y,\widetilde Y\setminus E)$. Together with the integral
projective-bundle decomposition
\[
 H^{*}(E )
 =
 \pi^{*}H^{*}(C )
 \oplus
 \xi\,\pi^{*}H^{*-2}(C ),
 \qquad \xi=c_{1}(\mathcal O_E(1)),
\]
the comparison map is the identity off the exceptional locus and sends
the Thom summand of $C$ to the $\xi$-summand of $E$. Five Lemma applied to the two long exact sequences therefore gives what we want.
\end{proof}

\begin{lemma}
\label{lem:pi1-blowup}
Both the blow-up maps in the Atiyah flop induce isomorphisms
\[
 \pi_{1}(\widehat X)\cong\pi_{1}(X_{-}),
 \qquad
 \pi_{1}(\widehat X)\cong\pi_{1}(X_{+}).
\]
\end{lemma}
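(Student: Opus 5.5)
The plan is to use the general fact that a blow-up of a complex manifold along a submanifold of real codimension at least $4$ does not change the fundamental group. We apply it to both arrows in \eqref{eq:global-flop-diagram}. Each arrow is a blow-up along a smooth rational curve ($C_-\subseteq X_-$, resp.\ $C_+\subseteq X_+$) of complex codimension two, i.e.\ real codimension four. In both cases the exceptional locus is the same divisor $R\subseteq\widehat X$, of real codimension two, and
\[
X_-\setminus C_-=X_+\setminus C_+=\widehat X\setminus R=:U.
\]

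First, $\pi_1(U)\to\pi_1(X_\pm)$ is an isomorphism by general position, since $C_\pm$ has real codimension $4$. Surjectivity holds because a loop in $X_\pm$ can be pushed off a codimension-$4$ submanifold ($1+2<6$). Injectivity holds because a null-homotopy, i.e.\ a map of $D^2$, can be pushed off it by a relative perturbation ($2+2<6$). This is the same argument used in the proof of \cref{lem:exterior}.

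Second, $\pi_1(U)\to\pi_1(\widehat X)$ is surjective, because removing a real codimension-two submanifold does not disconnect and loops can be pushed off $R$ ($1+4<6$). Its kernel is normally generated by the meridian loops around $R$. These are the loops that wind once around $R$ inside a small normal disc of the tubular neighbourhood, and we may choose one near a point $r\in R$. (Van Kampen for $\widehat X=U\cup N(R)$ gives this, since $N(R)\setminus R\simeq$ circle bundle over $R$ and $\pi_1(N(R))=\pi_1(R)=1$ as $R\cong\mathbb P^1\times\mathbb P^1$.)

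The key step is showing that this meridian is already trivial in $\pi_1(U)$. Under the blow-down $\operatorname{Bl}_{C_-}$, a normal disc to $R$ at $r$ maps to a small holomorphic disc in $X_-$ meeting $C_-$ transversally at one point. Its boundary circle is a loop in $X_-\setminus C_-$ that links a real codimension-four submanifold, hence is null-homotopic in $U$. Concretely, it bounds the punctured-disc image union a small disc swept inside the $S^3$-linking sphere bundle of $C_-$, and the fibre $S^3$ is simply connected. More directly: $N(C_-)\setminus C_-\simeq$ an $S^3$-bundle over $S^2$, which is simply connected. In that space the meridian of $R$ is the loop going around the Hopf fibre of $S^3\to\mathbb P(\mathcal N)$, and it is contractible in $S^3$.

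Hence the kernel is trivial, so $\pi_1(U)\cong\pi_1(\widehat X)$. Combining the two isomorphisms gives $\pi_1(\widehat X)\cong\pi_1(X_-)$, and by the identical argument with $C_+$ in place of $C_-$, $\pi_1(\widehat X)\cong\pi_1(X_+)$. It remains to check that the composite is the map induced by the blow-up: the squares $U\hookrightarrow\widehat X\to X_\pm$ and $U\hookrightarrow X_\pm$ commute, so it is. The main (mild) obstacle is identifying the meridian of $R$ with a Hopf-fibre loop in the link $S^3$ of $C_\pm$. This is a local computation in the model $\mathcal O(-1)\oplus\mathcal O(-1)$ blown up along its zero section, where $N(R)\setminus R$ and $N(C_\pm)\setminus C_\pm$ are literally the same space.
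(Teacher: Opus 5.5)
Your proof is correct and rests on the same key input as the paper's: the common link $N(C_\pm)\setminus C_\pm = N(R)\setminus R$ is an $S^3$-bundle over $S^2$, hence simply connected, and this feeds into van Kampen together with $\pi_1(R)=\pi_1(C_\pm)=1$. The paper applies van Kampen symmetrically, to $X_-$ as the union of $U$ and a tubular neighbourhood $T$ of $C_-$, and to $\widehat X$ as the union of $U$ and the preimage of $T$ (and likewise for $X_+$). Your general-position step on the $X_\pm$ side and your Hopf-fibre identification of the meridian of $R$ are therefore valid but not needed.
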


\begin{proof}
We treat $\widehat X \to X_{-}$. Let $T$
be a tubular neighbourhood of $C_{-}\cong\mathbb P^{1}$ and put
$U=X_{-}\setminus T$. The space $T$ retracts onto
$C_{-}$ and is simply connected. Its boundary is homeomorphic to $S^2\times S^3$, hence is simply connected.
The inverse image of $T$ retracts onto $R \cong\mathbb P^{1}\times\mathbb P^{1}$, so it is also simply connected. Its boundary is the same as
$\partial T$. Applying the van Kampen theorem proves the lemma.
\end{proof}

\begin{proposition}\label{thm:homology}
$X_{+}$ is simply connected and
\[
H_{k}(X_{+})=
\begin{cases}
\Z,&k=0,2,4,6,\\
0,&k=1,3,5.
\end{cases}
\]
\end{proposition}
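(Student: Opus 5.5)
The plan is to move everything across the common blow-up $\widehat X$ in diagram \eqref{eq:global-flop-diagram}. First, simple connectivity: \cref{lem:pi1-blowup} gives $\pi_1(X_+)\cong\pi_1(\widehat X)\cong\pi_1(X_-)$, and \cref{prop:Xminus-homology} says $\pi_1(X_-)=1$. Hence $X_+$ is simply connected and $H_1(X_+)=0$.

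Next I compute the cohomology of $\widehat X$ from the left leg. Apply \cref{lem:integral-blowup} to $\operatorname{Bl}_{C_-}\colon\widehat X\to X_-$ with $C_-\cong\mathbb P^1$, and use \cref{prop:Xminus-homology} together with universal coefficients. Since $X_-$ has torsion-free homology, its cohomology is $\Z$ in degrees $0,2,4,6$ and zero otherwise. This gives
\[
H^k(\widehat X)\cong H^k(X_-)\oplus H^{k-2}(\mathbb P^1)=
\begin{cases}\Z,&k=0,6,\\ \Z^2,&k=2,4,\\ 0,&k \text{ odd}.\end{cases}
\]

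Then I use the right leg. \cref{lem:integral-blowup} applies equally to $\operatorname{Bl}_{C_+}\colon\widehat X\to X_+$, since $C_+\cong\mathbb P^1$ is a smooth rational curve of codimension two (\cref{def:Xplus-flop}). So $H^k(\widehat X)\cong H^k(X_+)\oplus H^{k-2}(\mathbb P^1)$, and $H^k(X_+)$ is a direct summand of $H^k(\widehat X)$. In particular, $H^k(X_+)$ is free and vanishes in odd degrees.

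Comparing ranks fixes the rest. In degree $0$, $H^0(X_+)=\Z$. In degree $2$, $\Z^2\cong H^2(X_+)\oplus\Z$. A direct summand of a free module is projective, hence free, so $H^2(X_+)\cong\Z$. In degree $4$, $\Z^2\cong H^4(X_+)\oplus H^2(\mathbb P^1)$, so $H^4(X_+)\cong\Z$. In degree $6$, $H^6(X_+)\oplus 0\cong\Z$. Finally, the universal coefficient theorem (or Poincaré duality on the closed oriented manifold $X_+$) turns free cohomology into the stated homology: $\Z$ in even degrees and $0$ in odd degrees.

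The only point needing care is that \cref{lem:integral-blowup} is applied to $X_+$, which has just been constructed analytically. I need $X_+$ to be a complex manifold and $\widehat X\to X_+$ to be a genuine blow-up along $C_+$. This is exactly the content of the standard Atiyah contraction recalled in \cref{def:Xplus-flop}, so the argument should go through without obstruction.
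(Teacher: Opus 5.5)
Your proposal is correct and matches the paper's proof. Both compute $H^*(\widehat X)$ from the $X_-$ leg via \cref{lem:integral-blowup}, read off $H^*(X_+)$ as the complementary summand from the $X_+$ leg, get simple connectivity from \cref{lem:pi1-blowup}, and pass to homology by universal coefficients or Poincar\'e duality. Your remark that a direct summand of a free abelian group is free is a slightly cleaner way to rule out torsion than the paper's appeal to the classification of finitely generated abelian groups.
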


\begin{proof}
Apply \cref{lem:integral-blowup} to both arrows of the flop:
\[
 H^{k}(\widehat X )
 \cong H^{k}(X_{-} )
       \oplus H^{k-2}(\mathbb P^{1} ),
\]
\[
 H^{k}(\widehat X )
 \cong H^{k}(X_{+} )
       \oplus H^{k-2}(\mathbb P^{1} ).
\]
The added group is $\mathbb Z$ for $k=2,4$, and is zero in every
other degree. By \cref{prop:Xminus-homology} and integral Poincaré duality,
the first decomposition gives
\[
 H^{k}(\widehat X )=
 \begin{cases}
 \mathbb Z,&k=0,6,\\
 \mathbb Z^{2},&k=2,4,\\
 0,&k=1,3,5.
 \end{cases}
\]
Comparing with the second decomposition degree by degree gives
\[
 H^{k}(X_{+} )=
 \begin{cases}
 \mathbb Z,&k=0,2,4,6,\\
 0,&k=1,3,5.
 \end{cases}
\]
For $k=2,4$, this cancels one free $\mathbb Z$-summand. The
classification of finitely generated abelian groups also shows that no
torsion can occur.

By \cref{lem:pi1-blowup,prop:Xminus-homology},
\[
 \pi_{1}(X_{+})\cong\pi_{1}(\widehat X)
 \cong\pi_{1}(X_{-})=1.
\]
The universal coefficient theorem, together with integral Poincaré
duality, converts the cohomology table into the stated homology table
and confirms that no hidden torsion occurs. 
\end{proof}

\begin{remark}\label{rem:not-diffeo-flop}
An flop preserve the cohomology and homology groups (as additive groups), but it may change the ring structure of the cohomology ring, thus the cubic form and the first Pontryagin functional. Those invariants are computed separately below.
\end{remark}

\section{The Wall invariants of \texorpdfstring{$X_{-}$ and $X_{+}$}{Xminus and X+}}\label{sec:wall-invariants}

\subsection{Characteristic classes of \texorpdfstring{$X_{-}$}{Xminus}}

Let
\[
 \mathrm{Bl}_C\colon \widetilde X:=\Bl_{C}(X_{\mathrm{sph}})\longrightarrow
 X_{\mathrm{sph}}
\]
be the blow-up of the fixed curve, and denote its exceptional divisor by
$E$.  The lifted involution $\widetilde \iota$ fixes $E$ pointwise, and gives a holomorphic double covering
\[
 q\colon\widetilde X\longrightarrow X_{-}
\]
branched along the exceptional divisor
\[
 F:=q(E)\subseteq X_{-},
 \qquad q^{*}F=2E.
\] 
Note that $q$ is finite and flat
of degree two. 
$q_{*}\mathcal O_{\widetilde X}$ is locally free of rank two. Consider the trace morphism associated with the finite morphism $q$. We have an exact sequence 
\[0\to \ker(\operatorname{Tr})\to q_{*}\mathcal O_{\widetilde X}\xrightarrow{\mathrm{Tr}} \mathcal O_{X_{-}}\to 0.\]
The trace morphism splits the inclusion of constants because $\operatorname{Tr}=\operatorname{id}+\iota^*$. Then let \[\mathcal{L}:=\ker(\operatorname{Tr})^{-1}\] be a line bundle. The trace decomposition is
\[
q_{*}\mathcal O_{\widetilde X}
   =\mathcal O_{X_{-}}\oplus\mathcal L^{-1},
\qquad
\mathcal L^{-1}=\ker(\operatorname{Tr}).
\]
The product of two anti-invariant sections is invariant and therefore induces a homomorphism
\[
\mathcal L^{-1}\otimes\mathcal L^{-1}
   \longrightarrow\mathcal O_{X_{-}},
\]
or equivalently a section
\[
\sigma\in H^{0}(X_{-},\mathcal L^{\otimes2}).
\]
On an open set $U\subset X_{-}$ with a frame $e$ of $\mathcal L$,
write $\sigma=f e^{\otimes2}$, and let $t$ denote the element of the
trace-zero summand corresponding to $e^{-1}$. Then
\[
q_{*}\mathcal O_{\widetilde X}|_{U}   \cong\mathcal O_{U}[t]/(t^{2}-f).
\]
Let $F=\operatorname{div}(\sigma)$ be the branch divisor and let
$E\subset\widetilde X$ be the ramification divisor. The section
$\sigma$ gives
\[
\mathcal L^{\otimes2}\cong\mathcal O_{X_{-}}(F).
\]
Moreover, the local sections $t\,q^{*}e$ glue to a global section of
$q^{*}\mathcal L$ whose zero divisor is $E$. Hence
\begin{equation}\label{eq:branch-half}
    q^{*}\mathcal L\cong\mathcal O_{\widetilde X}(E),\qquad q^{*}F=2E.
\end{equation}
Setting
\[
\ell:=c_{1}(\mathcal L)\in H^{2}(X_{-} ),
\]
we obtain
\[
[F]:=c_{1}\!\left(\mathcal O_{X_{-}}(F)\right)=2\ell.
\]

\begin{lemma}\label{lem:exceptional-cubic}\label{prop:Xminus-cubic}
We have \[
 E^{3}=2,\qquad \bigl\langle \ell^{3},[X_{-}]\bigr\rangle=1.
\] In particular, the class $\ell$ is a generator of $H^{2}(X_{-})$.
\end{lemma}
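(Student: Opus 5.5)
The plan is to compute $E^{3}$ on $\widetilde X$ from the standard self-intersection formula for the exceptional divisor of a blow-up along a curve. Then I will transport the result to $X_-$ through the projection formula for the degree-two map $q$. Finally, I will use the structure of $H^{2}(X_-)$ from \cref{prop:Xminus-homology} to see that $\ell$ is a generator.

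\textbf{Computing $E^3$.} By \cref{lem:normal-trivial}, $E=\mathbb P(\mathcal N)$ with $\mathcal N:=\mathcal N_{C/X_{\mathrm{sph}}}\cong\mathcal O(-1)\oplus\mathcal O(-1)$. The restriction $\mathcal O_{\widetilde X}(E)|_E$ is the tautological line bundle $\mathcal O_{\mathbb P(\mathcal N)}(-1)$. Put $\xi:=c_1(\mathcal O_{\mathbb P(\mathcal N)}(1))$, so $E|_E=-\xi$. Then
\[
E^{3}=\langle (E|_E)^{2},[E]\rangle=\langle \xi^{2},[E]\rangle .
\]
The Grothendieck relation $\xi^{2}+c_1(\pi^*\mathcal N)\xi=0$ (with $c_2=0$ on a curve) gives $\xi^{2}=-c_1(\mathcal N)\,\xi=2\,h\xi$, where $h$ is the fibre class. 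Since $\langle h\xi,[E]\rangle=1$, we obtain $\xi^{2}=2$. Hence $E^{3}=2$, which matches the well-known formula $E^{3}=-\deg\mathcal N+2-2g$, giving $2+0$ here.

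As a sanity check, $E\cong\mathbb P^1\times\mathbb P^1$ and $\mathcal O(E)|_E$ should be $\mathcal O(-1,-1)$ up to the choice of rulings. Then $(E|_E)^2=2$, which agrees. The sign conventions are the step I would double-check most carefully.

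\textbf{Transporting to $X_-$.} By \eqref{eq:branch-half} we have $q^{*}\ell=[E]$. Since $q$ has degree two, $q_*[\widetilde X]=2[X_-]$, so the projection formula gives
\[
2\langle\ell^{3},[X_-]\rangle=\langle q^{*}\ell^{3},[\widetilde X]\rangle=E^{3}=2 .
\]
Therefore $\langle\ell^3,[X_-]\rangle=1$.

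Orientation compatibility is automatic, because $q$ is a holomorphic finite map and so preserves complex orientations. One can also argue via $[F]=2\ell$: $F^{3}=8\ell^{3}$, while $q^{*}F=2E$ gives $2F^{3}=8E^{3}=16$, so $F^3=8$, which is consistent.

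\textbf{$\ell$ is a generator.} By \cref{prop:Xminus-homology} and the universal coefficient theorem, $H^{2}(X_-)\cong\Z$ with some generator $g$. Write $\ell=kg$. Then $1=\langle\ell^3,[X_-]\rangle=k^{3}\langle g^3,[X_-]\rangle$, and since both factors are integers this forces $k=\pm1$. Hence $\ell$ is a generator.

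The only real obstacle is the sign and normalization in the blow-up formula $E|_E=\mathcal O_{\mathbb P(\mathcal N)}(-1)$ together with the Grothendieck relation. I would cross-check this against the local model in which $E\cong\mathbb P^1\times\mathbb P^1$ has normal bundle $\mathcal O(-1,-1)$.
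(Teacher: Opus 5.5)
Your proof is correct and follows the paper's route: you compute $E^3=2$ from the projective-bundle structure of $E=\mathbb P(\mathcal N_{C/X_{\mathrm{sph}}})$ (the paper's push-forward formulas $\pi_*\eta=-1$, $\pi_*\eta^2=-c_1(\mathcal N)$ amount to your Grothendieck relation), push forward along the degree-two map $q$ using $q^*\ell=[E]$, and deduce that $\ell$ generates $H^2(X_-)$ from $\ell^3=1$. One slip in your aside, which the main computation does not use: the general formula is $E^3=-\deg\mathcal N_{C/X}=K_X\cdot C+2-2g$, not $-\deg\mathcal N+2-2g$ (that expression would give $4$ here).
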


\begin{proof}
For the blow-up of a complex threefold along a smooth curve, let
\[
 \pi\colon E=\mathbb{P}(\mathcal{N}_{C/X_{\mathrm{sph}}})\longrightarrow C,
 \qquad
 \eta :=c_{1}\bigl(\mathcal{O}_{\Pp(\mathcal{N}_{C/X_{\mathrm{sph}}})}(-1)\bigr).
\]Thus, on each fibre $\pi^{-1}(c)\cong\mathbb P^{1}$, the class
$\eta$ has degree $-1$.
Then $\mathcal O_{\widetilde X}(E)|_{E}\cong\mathcal O_{\Pp(\mathcal{N}_{C/X_{\mathrm{sph}}})}(-1)$ and $[E]|_{E}=\eta$. The push-forward formulas give
\[
 \pi_{*}(\eta)=-1,
 \qquad
 \pi_{*}(\eta^{2})=-c_{1}(\mathcal{N}_{C/X_{\mathrm{sph}}}).
\]
By \cref{lem:normal-trivial}, we have $\deg c_{1}(\mathcal{N}_{C/X_{\mathrm{sph}}})=-2$. Consequently,
\[
 E^{3}
 =\int_{E}\eta^{2}
 =-\int_{C}c_{1}(\mathcal{N}_{C/X_{\mathrm{sph}}})=2.
\]
Note that the degree of $q$ is two. By \cref{eq:branch-half} we have
\[
 2\bigl\langle \ell^{3},[X_{-}]\bigr\rangle
 =\bigl\langle q^{*}\ell^{3},[\widetilde X]\bigr\rangle
 =E^{3}=2.
\]
Hence $\ell^{3}=1$ and so $\ell$ is a generator.
\end{proof}

Now we compute the second Stiefel-Whitney class and the Pontryagin functional on $X_{-}$.

\begin{proposition}\label{prop:Xminus-c1}
The first Chern class of $X_{-}$ vanishes.  In particular,
\[
 w_{2}(X_{-})=0.
\]
\end{proposition}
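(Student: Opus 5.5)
Since $H^{2}(X_{-})\cong\Z$ is generated by $\ell$ (\cref{lem:exceptional-cubic}), it suffices to show that $c_{1}(X_{-})$ has degree zero on one class, or equivalently that $q^{*}c_{1}(X_{-})=0$. Note that $q^{*}$ is injective on $H^{2}(X_{-})$: from $q^{*}\ell=[E]$ and $E^{3}=2\neq 0$ we get that $q^{*}\ell$ is non-torsion. So I will compute $q^{*}c_{1}(X_{-})$ on $\widetilde X$ by combining the ramification formula for $q$ with the blow-up formula for $\mathrm{Bl}_C$.

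\textbf{Step 1: canonical bundle of $X_{\mathrm{sph}}$.} Since $X_{\mathrm{sph}}\cong_{\mathrm{diff}}S^{6}$, we have $H^{2}(X_{\mathrm{sph}})=0$, so $c_{1}(X_{\mathrm{sph}})=0$ topologically.

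\textbf{Step 2: the blow-up.} The blow-up formula for a smooth curve in a threefold gives
\[
K_{\widetilde X}=\mathrm{Bl}_C^{*}K_{X_{\mathrm{sph}}}+E,
\]
hence
\[
c_{1}(\widetilde X)=-[E].
\]

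\textbf{Step 3: the double cover.} The ramification formula for $q$, branched along the smooth divisor $F$ with $q^{*}F=2E$, gives
\[
K_{\widetilde X}=q^{*}K_{X_{-}}+E.
\]
Hence
\[
q^{*}c_{1}(X_{-})=c_{1}(\widetilde X)+[E]=0.
\]
One can verify this formula in the local coordinates $(z,x,t)\mapsto(z,s=x^{2},t)$ of \cref{sec:source-resolution}: there $q^{*}(dz\wedge ds\wedge dt)=2x\,dz\wedge dx\wedge dt$, which vanishes to order one exactly along $E=\{x=0\}$.

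\textbf{Step 4: conclusion.} Comparing Steps 2 and 3, $q^{*}c_{1}(X_{-})=0$, so by the injectivity of $q^{*}$ on $H^{2}$ we get $c_{1}(X_{-})=0$ in $H^{2}(X_{-};\Z)$. Equivalently, writing $c_{1}(X_{-})=k\ell$ gives $k[E]=0$, and since $E^{3}=2$ this forces $k=0$. It follows that $w_{2}(X_{-})=c_{1}(X_{-})\bmod 2=0$.

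I expect no serious obstacle. The only point needing care is the sign and multiplicity of the ramification term in Step 3, and the local coordinate computation in that step settles it.
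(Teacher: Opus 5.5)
Your proof is correct and follows the paper's argument: both combine the blow-up formula $K_{\widetilde X}=\mathrm{Bl}_C^{*}K_{X_{\mathrm{sph}}}+E$ with the ramification formula $K_{\widetilde X}=q^{*}K_{X_-}+E$ (the paper's version is $K_{\widetilde X}\cong q^{*}(K_{X_-}\otimes\mathcal L)$ with $q^{*}\mathcal L\cong\mathcal O(E)$) to get $q^{*}c_1(X_-)=0$. The only difference is how you pass back down to $X_-$: the paper uses $q_*q^{*}=2$ together with torsion-freeness of $H^2(X_-)$, while you use that $q^{*}\ell=[E]$ is non-torsion because $E^3=2$, and either justification works.
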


\begin{proof}
The canonical-bundle formulas for blow-up along $C$ and for the
double cover $q$ branched in $F\in|2\mathcal{L}|$ give
\[
 \mathrm{Bl}_C^{*}K_{X_{\mathrm{sph}}}\otimes\mathcal O_{\widetilde X}(E)\cong K_{\widetilde X} \cong q^{*}(K_{X_{-}}\otimes \mathcal{L}).
\]
Using \eqref{eq:branch-half}, their first Chern classes
give $q^{*}c_{1}(K_{X_{-}})=\mathrm{Bl}_C^{*}c_{1}(K_{X_{\mathrm{sph}}})=0$.  
Therefore
\[2c_{1}(K_{X_{-}})=q_{*}q^{*}c_{1}(K_{X_{-}})=0.
\]
By \cref{prop:Xminus-homology}, $H^{2}(X_{-})\cong\Z$ is torsion-free,
so $c_{1}(K_{X_{-}})=0$. Hence 
\[w_2(X_-)=w_2(TX_-)=c_1(-K_{X_-}) \mathrm{mod}\ 2=0.\]
\end{proof}

\begin{proposition}\label{prop:Xminus-p1}
The first Pontryagin functional $p_{X_-}(x)=\langle p_{1}(X_-)\cup x,[X_-]\rangle$ of $X_{-}$ satisfies $p_{X_-}(\ell)=4$.
\end{proposition}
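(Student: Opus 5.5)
The plan is to pull everything back along the double cover $q\colon\widetilde X\to X_{-}$. Since $\deg q=2$ and $q^{*}\ell=[E]$ by \eqref{eq:branch-half}, we have
\[
 2\,p_{X_-}(\ell)=\bigl\langle q^{*}p_{1}(X_{-})\cup[E],[\widetilde X]\bigr\rangle .
\]
So it suffices to express $q^{*}p_{1}(X_{-})$ in terms of $p_{1}(\widetilde X)$ and $[E]$, and then to express $p_{1}(\widetilde X)$ through the blow-up $\mathrm{Bl}_{C}$. Throughout, $p_{1}$ of a complex bundle is computed as $c_{1}^{2}-2c_{2}$. For a line bundle $L$ this gives $p_{1}(L)=c_{1}(L)^{2}$, and $p_{1}$ is additive on exact sequences modulo $2$-torsion. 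Torsion is harmless here, since we only evaluate against $[E]$.

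\emph{Step 1 (ramification).} The differential of $q$ gives an exact sequence of sheaves on $\widetilde X$:
\[
 0\to T\widetilde X\to q^{*}TX_{-}\to \mathcal O_{E}(q^{*}F)\to 0,
 \qquad \mathcal O_{E}(q^{*}F)=\mathcal O_{E}(2E).
\]
This can be checked in the local coordinates $(z,x,t)\mapsto(z,s=x^{2},t)$ of \S\ref{sec:source-resolution}. In K-theory, $\mathcal O_{E}(2E)=\mathcal O(2E)-\mathcal O(E)$. Hence
\[
 q^{*}p_{1}(X_{-})=p_{1}(\widetilde X)+4[E]^{2}-[E]^{2}=p_{1}(\widetilde X)+3[E]^{2}.
\]

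\emph{Step 2 (blow-up).} We use the standard Chern class formulas for the blow-up of a threefold along a smooth curve (e.g.\ Griffiths--Harris, or Fulton, Example 15.4.3):
\[
 c_{1}(\widetilde X)=\mathrm{Bl}_{C}^{*}c_{1}(X_{\mathrm{sph}})-[E],\qquad
 c_{2}(\widetilde X)=\mathrm{Bl}_{C}^{*}\bigl(c_{2}(X_{\mathrm{sph}})+\eta_{C}\bigr)-\mathrm{Bl}_{C}^{*}c_{1}(X_{\mathrm{sph}})\cdot[E],
\]
where $\eta_{C}$ is the Poincaré dual of $C$. Since $H^{2}(S^{6})=H^{4}(S^{6})=0$, all classes pulled back from $X_{\mathrm{sph}}$ in degrees $2$ and $4$ vanish. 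This leaves
\[
 p_{1}(\widetilde X)=[E]^{2}-2\,\mathrm{Bl}_{C}^{*}\eta_{C}=[E]^{2},
\]
up to classes that pair to zero with $[E]$. Even if one prefers to keep $\mathrm{Bl}_{C}^{*}\eta_{C}$, it pairs to zero with $[E]$ by the projection formula, because $(\mathrm{Bl}_{C})_{*}[E]=0$.

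\emph{Step 3 (assembly).} By \cref{lem:exceptional-cubic}, $E^{3}=2$. Therefore
\[
 2\,p_{X_-}(\ell)=\langle (p_{1}(\widetilde X)+3[E]^{2})[E],[\widetilde X]\rangle=E^{3}+3E^{3}=8,
\]
so $p_{X_-}(\ell)=4$.

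\emph{Expected obstacle.} The main point needing care is Step 1: justifying the ramification exact sequence globally and identifying the cokernel as $\mathcal O_{E}(2E)$ rather than $\mathcal O_{E}(E)$. The cokernel is the normal bundle of $F$ pulled back, and $q^{*}\mathcal O(F)=\mathcal O(2E)$. I would verify this directly in the chart $(z,x,t)$: there $dq$ sends $\partial_{x}$ to $2x\,\partial_{s}$, so the cokernel is $q^{*}\mathcal N_{F/X_-}|_{E}=\mathcal O_{E}(q^{*}F)$. A secondary check is the blow-up formula for $c_{2}$. As a fallback, I would compute $p_{1}(\widetilde X)\cdot[E]$ directly from the sequence
\[
 0\to TE\to T\widetilde X|_{E}\to\mathcal O_{E}(E)\to0
\]
with $E\cong\mathbb P^{1}\times\mathbb P^{1}$. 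Since $p_{1}(TE)=0$ for a product of spheres, this gives $p_{1}(\widetilde X)|_{E}=\eta^{2}$, whose integral over $E$ is $E^{3}=2$, consistent with Step 2.
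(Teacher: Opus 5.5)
Your proof is correct, but it takes a different route from the paper's.

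\textbf{The paper's route.} It never leaves $X_-$. It writes $p_{X_-}(\ell)=\tfrac12\langle p_1(X_-)\cup[F],[X_-]\rangle$ and restricts to the branch divisor using $TX_-|_F\cong TF\oplus\mathcal N_{F/X_-}$. This gives $\tfrac12\bigl(3\operatorname{sign}(F)+F^3\bigr)=\tfrac12(0+8)=4$, so the only inputs are $p_1(F)=0$ and $F^3=8$.

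\textbf{Your route.} You pull back along $q$ and compute the whole class. The ramification sequence gives $q^*p_1(X_-)=p_1(\widetilde X)+3[E]^2$. Your K-theory step is sound; to stay within vector bundles, take the kernel of $q^*TX_-\oplus\mathcal O(2E)\to\mathcal O_E(2E)$, which sits in two short exact sequences of bundles. The blow-up formula then gives $p_1(\widetilde X)=[E]^2$.

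\textbf{What each approach buys.} Your route yields more than the paper needs. You get the identities $p_1(\widetilde X)=[E]^2$ and $q^*p_1(X_-)=4[E]^2$, hence $p_1(X_-)=4\ell^2$ in the torsion-free group $H^4(X_-)$. In particular you get the value $\langle p_1(\widetilde X)[E],[\widetilde X]\rangle=2$, which the paper asserts without proof when identifying $\widetilde X$ with $Q_3$. The cost is heavier machinery. Fulton's blow-up formula is stated for schemes, and $X_{\mathrm{sph}}$ is not algebraic. You should therefore rely on a topological version of the formula (Porteous) or on your fallback.

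Your fallback combined with Step~1 is just the paper's argument lifted to $E$. Indeed $q^*TX_-|_E\cong TE\oplus\mathcal O_E(2E)$ gives $\int_E(2\eta)^2=8$, which is $F^3=8$ read upstairs.
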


\begin{proof}
Since $F\cong_{\mathrm{diff}}\Pp^1\times \Pp^1$, the signature of $F$ is zero. Moreover $[F]=2\ell$ and $\ell^{3}=1$, hence
\[
 \langle [F]^{3},[X_-]\rangle=\langle (2\ell)^3,[X_-]\rangle=8.
\]
For the oriented real two-plane bundle underlying the normal complex line bundle, $p_{1}(\mathcal{N}_{F/X_{-}})=c_{1}(\mathcal{N}_{F/X_{-}})^{2}$.
The self-intersection formula gives
\[
 c_{1}(\mathcal{N}_{F/X_{-}})=i^{*}[F],
\qquad i\colon F\hookrightarrow X_{-},
\]
and therefore
\[
 \left\langle c_{1}(\mathcal{N}_{F/X_{-}})^{2},[F]\right\rangle  =\langle [F]^{3},[X_-]\rangle=8.
\]
The real normal sequence along $F$ splits, so as real vector bundles $TX_{-}|_{F}\cong TF\oplus \mathcal N_{F/X_{-}}$.
The Hirzebruch signature theorem on the four-manifold $F$ therefore
gives
\begin{align*}
 \left\langle p_{1}(X_{-})\cup \ell,[X_{-}]\right\rangle&=\frac12\bigl\langle p_{1}(X_{-})\cup[F],[X_{-}]\bigr\rangle\\
 &=\frac12\bigl\langle p_{1}(TF),[F]\bigr\rangle
   +\frac12\bigl\langle c_{1}(\mathcal{N}_{F/X_{-}})^{2},[F]\bigr\rangle\\
 &=\frac32\operatorname{sign}(F)+\frac12F^{3}=4.
\end{align*}
\end{proof}

\subsection{Characteristic classes of \texorpdfstring{$X_{+}$}{Xplus}}

Consider the flop described in \cref{eq:global-flop-diagram}. Let $R\subseteq \widehat X$ be the exceptional divisor. By Section~\ref{sec:Xplus-topology}
\[
 C_{-}\cong\mathbb P^{1},
 \qquad
 \mathcal N_{C_{-}/X_-}\cong\mathcal O(-1)\oplus\mathcal O(-1),
 \qquad
 R\cong\Pp^{1}\times\Pp^{1}.
\]
We now compute the characteristic classes of \texorpdfstring{$X_{+}$}{Xplus}.

Since $C$ intersects with $D_-$ in $ Q,  Q'$ transversely, the curve $C_-$ satisfies $F\cdot C_{-}=2$ and $\ell\cdot C_{-}=1$. Choose the rulings of $R\cong\Pp^{1}\times\Pp^{1}$ so that
$\operatorname{Bl}_{C_-}|_{R}$ is the projection to the first factor and $\operatorname{Bl}_{C_+}|_{R}$ is the projection to the second.  The line bundle $\mathcal{L}$ has degree one on
$C_{-}$, and therefore
\begin{equation}\label{lem:branch-flop-intersection}
    \operatorname{Bl}_{C_-}^{*}(\mathcal{L})|_{R}\cong\mathcal O_{R}(1,0),
 \qquad
 \mathcal O_{\widehat{X}}(R)|_{R}\cong\mathcal O_{R}(-1,-1).
\end{equation}
For the blow up along $C_+$, we have
\[
 \Pic(\widehat X)
 \cong \operatorname{Bl}_{C_+}^{*}\Pic(X_{+})\oplus
       \Z[\mathcal O_{\widehat X}(R)],
\]
where the second coefficient is detected by degree on an exceptional
fibre. Indeed, restriction defines a degree homomorphism
$\Pic(\widehat{X})\to\mathbb Z$, the bundle $\mathcal O_{\widehat{X}}(R)$ has degree
$-1$, and the kernel consists precisely of pullbacks: a line bundle
of degree zero restricts on
$R=\mathbb P(\mathcal{N}_{C_{+}/X_{+}})$ as the pullback of a line bundle on
$C_{+}$, and hence descends uniquely across the smooth centre.
Therefore there is a unique holomorphic line bundle $\mathcal{L}_{+}$ on
$X_{+}$ such that \begin{equation}\label{eq:flop-divisor-relation}
    \operatorname{Bl}_{C_+}^{*}\mathcal{L}_{+}\cong \operatorname{Bl}_{C_-}^{*}\mathcal{L}\otimes\mathcal O_{\widehat X}(R).
\end{equation}  Put
\[
 \ell_{+}:=c_{1}(\mathcal{L}_{+}).
\]
Let $U=X_-\setminus C_-=X_+\setminus C_+$ be the complement of the two flopping curves. The excision gives $H^{k}(X_{\pm},U )
 \cong H^{k-4}(C_{\pm} )$.
Hence the relative groups vanish for
$k=2,3$, and the long exact sequences of the pairs give isomorphisms
\[
 H^{2}(X_{-})\cong H^{2}(U)
 \cong H^{2}(X_{+}).
\]
Restricting \eqref{eq:flop-divisor-relation} to the common complement
gives $\ell|_{U}=\ell_{+}|_{U}$. Thus these isomorphisms identify $\ell$ with $\ell_{+}$.

\begin{proposition}\label{prop:flop-cubic}
The generator $\ell_{+}\in H^{2}(X_{+})$ satisfies
\[
 \bigl\langle \ell_{+}^{3},[X_{+}]\bigr\rangle=0.
\]
\end{proposition}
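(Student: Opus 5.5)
Since $\operatorname{Bl}_{C_+}$ is a birational morphism of compact complex manifolds, it has degree one. I will therefore compute $\ell_+^3$ on $\widehat X$ by pulling back. By \eqref{eq:flop-divisor-relation}, writing $L:=\operatorname{Bl}_{C_-}^{*}\ell$ and $r:=[R]$, we have
\[
\bigl\langle \ell_{+}^{3},[X_{+}]\bigr\rangle=\bigl\langle (L+r)^{3},[\widehat X]\bigr\rangle
= L^{3}+3L^{2}r+3Lr^{2}+r^{3}.
\]
The plan is to evaluate each of the four terms using only the data on $R$ recorded in \eqref{lem:branch-flop-intersection}, together with \cref{prop:Xminus-cubic}.

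First, $L^3=\langle\ell^3,[X_-]\rangle=1$, because $\operatorname{Bl}_{C_-}$ also has degree one. The remaining three terms contain at least one factor of $r$, so each reduces to an integral over $R$ of the restrictions. Writing $h_1,h_2$ for the classes of $\mathcal O_R(1,0)$ and $\mathcal O_R(0,1)$, we have $h_1^2=h_2^2=0$ and $h_1h_2=1$. By \eqref{lem:branch-flop-intersection}, $L|_R=h_1$ and $r|_R=-h_1-h_2$. This gives
\[
L^2r=\int_R h_1^2=0,\qquad Lr^2=\int_R h_1(-h_1-h_2)=-1,\qquad r^3=\int_R(h_1+h_2)^2=2.
\]
Summing, $1+0-3+2=0$, as claimed.

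The only step that needs care is justifying \eqref{lem:branch-flop-intersection}, and in particular the normalisation $\ell\cdot C_-=1$. I would derive this from $F\cdot C_-=2$ (two transverse intersection points $Q,Q'$) and $[F]=2\ell$. The restriction $\operatorname{Bl}_{C_-}^*\mathcal L|_R$ is pulled back from $C_-$ along the first projection, so it is $\mathcal O_R(1,0)$. The identity $\mathcal O_{\widehat X}(R)|_R\cong\mathcal O_R(-1,-1)$ is the standard consequence of $\mathcal N_{C_-/X_-}\cong\mathcal O(-1)\oplus\mathcal O(-1)$ from \cref{lem: flop-curve}.

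As a consistency check, $\ell_+$ has degree $1-1=0$ on the fibres of $\operatorname{Bl}_{C_+}$ (the first-factor fibres of $R$), as it must for a pulled-back class.
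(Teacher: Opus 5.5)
Your proposal is correct and is essentially the paper's proof. You pull back along the degree-one map $\operatorname{Bl}_{C_+}$, expand $(\operatorname{Bl}_{C_-}^{*}\ell+R)^{3}$ via \eqref{eq:flop-divisor-relation}, and evaluate the three mixed terms $0$, $-1$, $2$ by restricting to $R\cong\mathbb P^1\times\mathbb P^1$ using \eqref{lem:branch-flop-intersection}, which gives $1+0-3+2=0$. The only difference is that you write out, in terms of $h_1,h_2$, the restriction computations that the paper states without detail.
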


\begin{proof}
Choose the two rulings of
$R\cong\Pp^{1}\times\Pp^{1}$ so that $\operatorname{Bl}_{C_{-}}|_{R}$ is projection to the
first factor.  By \cref{lem:branch-flop-intersection}, we have the intersection formulas
\[
 (\operatorname{Bl}_{C_-}^*\ell)^{2}R=0,
 \qquad
 \operatorname{Bl}_{C_-}^*\ell\,R^{2}=-1,
 \qquad
 R^{3}=2.
\]
Using \cref{eq:flop-divisor-relation} and $\ell^{3}=1$, we obtain
\begin{align*}
 \left\langle \ell_{+}^{3},[X_{+}]\right\rangle
 &=\left\langle(\operatorname{Bl}_{C_+}^{*}\ell_{+})^{3},[\widehat{X}]\right\rangle\\
 &=\left\langle(\operatorname{Bl}_{C_-}^*\ell+R)^{3},[\widehat{X}]\right\rangle\\
 &=\ell^{3}+3(\operatorname{Bl}_{C_-}^*\ell)^{2}R+3\operatorname{Bl}_{C_-}^*\ell\,R^{2}+R^{3}\\
 &=1+0-3+2=0.
\end{align*}
\end{proof}

We next compute the Chern classes and Pontryagin classes after the flop.

\begin{proposition}\label{prop:Xplus-spin}
The first Chern class of $X_{+}$ vanishes.  In particular,
\[
 w_{2}(X_{+})=0.
\]
\end{proposition}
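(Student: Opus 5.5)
The plan is to compare canonical bundles across the common blow-up $\widehat X$ in \eqref{eq:global-flop-diagram}. Both arrows are blow-ups of a smooth curve in a threefold, so the blow-up canonical-bundle formula applies to each:
\[
 K_{\widehat X}\cong \operatorname{Bl}_{C_-}^{*}K_{X_-}\otimes\mathcal O_{\widehat X}(R)
 \cong \operatorname{Bl}_{C_+}^{*}K_{X_+}\otimes\mathcal O_{\widehat X}(R).
\]
Proposition~\ref{prop:Xminus-c1} gives $c_1(K_{X_-})=0$. Taking first Chern classes then yields
\[
 \operatorname{Bl}_{C_+}^{*}c_1(K_{X_+})=c_1(K_{\widehat X})-[R]=\operatorname{Bl}_{C_-}^{*}c_1(K_{X_-})=0 .
\]

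Next I would use that $\operatorname{Bl}_{C_+}^{*}$ is injective on $H^2$. By \cref{lem:integral-blowup} it is the inclusion of a direct summand, so $H^2(\widehat X)\cong \operatorname{Bl}_{C_+}^*H^2(X_+)\oplus\Z[R]$. Alternatively, one can push forward, since $(\operatorname{Bl}_{C_+})_*\operatorname{Bl}_{C_+}^*=\mathrm{id}$ for a degree-one map. Either way $c_1(K_{X_+})=0$, hence $c_1(X_+)=0$.

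A second route gives the same conclusion without $\widehat X$. Restricted to $U=X_-\setminus C_-=X_+\setminus C_+$, the canonical bundles agree, so $c_1(X_+)|_U=c_1(X_-)|_U=0$. The restriction $H^2(X_+)\to H^2(U)$ is an isomorphism, as shown just before Proposition~\ref{prop:flop-cubic}, so again $c_1(X_+)=0$. I would mention this as the cleanest version.

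Finally, $w_2(X_+)$ is the mod~$2$ reduction of $c_1(TX_+)=-c_1(K_{X_+})=0$, so $w_2(X_+)=0$. The only real point is the injectivity of the pullback, or the restriction isomorphism to $U$, on $H^2$. Both are already established above: $H^2(X_+)\cong\Z$ is torsion-free by Proposition~\ref{thm:homology}, and the relative groups $H^k(X_+,U)$ vanish for $k=2,3$. I do not expect a genuine obstacle.
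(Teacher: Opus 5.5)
Your argument is correct, and your main route is the paper's proof: the two blow-up canonical-bundle formulas on $\widehat X$, the vanishing $c_1(K_{X_-})=0$ from \cref{prop:Xminus-c1}, injectivity of $\operatorname{Bl}_{C_+}^{*}$ on $H^2$ via \cref{lem:integral-blowup}, and reduction modulo two. Your alternative via restriction to $U$ is also valid, since $H^{2}(X_{+},U)\cong H^{-2}(C_{+})=0$ already makes $H^{2}(X_{+})\to H^{2}(U)$ injective.
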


\begin{proof}
The canonical-bundle formulas for the two blow-ups give
\[
 K_{\widehat{X}}\cong \operatorname{Bl}_{C_-}^*K_{X_{-}}\otimes\mathcal O_{\widehat{X}}(R)
 \cong \operatorname{Bl}_{C_+}^{*}K_{X_{+}}\otimes\mathcal O_{\widehat{X}}(R).
\]
Thus $\operatorname{Bl}_{C_-}^*c_{1}(K_{X_{-}})=\operatorname{Bl}_{C_+}^{*}c_{1}(K_{X_{+}})$. The left side is zero by \cref{prop:Xminus-c1}, and pullback by a blow-up is
injective in degree two by \cref{lem:integral-blowup}.  Hence
$c_{1}(K_{X_{+}})=0$, so
$c_{1}(TX_{+})=-c_{1}(K_{X_{+}})=0$.  Reduction modulo two gives
$w_{2}(X_{+})=0$.
\end{proof}

\begin{proposition}\label{prop:Xplus-p1}
The first Pontryagin functional $p_{X_+}(x)=\langle p_{1}(X_+)\cup x,[X_+]\rangle$ of $X_{+}$ is identically zero.
\end{proposition}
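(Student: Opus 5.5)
The plan is to compare both $X_{-}$ and $X_{+}$ with the common blow-up $\widehat X$ of \eqref{eq:global-flop-diagram}, in the same way that the cubic form was transported in \cref{prop:flop-cubic}. Since $H^{2}(X_{+})\cong\Z$ is generated by $\ell_{+}$, it suffices to show $p_{X_+}(\ell_{+})=0$. The input on the $X_-$ side is $p_{X_-}(\ell)=4$ from \cref{prop:Xminus-p1}. Each blow-up arrow introduces a correction supported on $R$, and we will show these corrections cancel the $4$.

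\emph{Step 1 (blow-up formula for $p_1$).} Let $\pi\colon\widehat Y=\operatorname{Bl}_{Z}Y\to Y$ be the blow-up of a threefold along a smooth curve $Z$, with exceptional divisor $R$. Let $\eta_Z\in H^{4}(Y)$ be the class of $Z$. I will use the standard Chern class formulas (Porteous / Griffiths--Harris):
\[
c_{1}(\widehat Y)=\pi^{*}c_{1}(Y)-R,\qquad c_{2}(\widehat Y)=\pi^{*}c_{2}(Y)+\pi^{*}\eta_{Z}-\pi^{*}c_{1}(Y)\cdot R .
\]
From $p_{1}=c_{1}^{2}-2c_{2}$ one gets
\[
p_{1}(\widehat Y)=\pi^{*}p_{1}(Y)+R^{2}-2\pi^{*}\eta_{Z}.
\]
Pairing with $\pi^{*}x$ and using $R^{2}\cdot\pi^{*}x=-(x\cdot Z)$ then gives
\[
\langle p_{1}(\widehat Y)\,\pi^{*}x,[\widehat Y]\rangle=p_Y(x)-3\,(x\cdot Z).
\]
This step is the main obstacle. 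The $c_{2}$ formula must be checked carefully, in particular the sign of the $\pi^{*}\eta_Z$ term and the identity $R^{2}\pi^{*}x=-(x\cdot Z)$. As a cross-check I would redo the computation locally: the correction depends only on a neighbourhood of $R$ and on $x\cdot Z$.

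\emph{Step 2 (apply to both arrows).} For $\operatorname{Bl}_{C_-}$ with $x=\ell$ and $\ell\cdot C_{-}=1$, Step 1 gives
\[
\langle p_{1}(\widehat X)\operatorname{Bl}_{C_-}^{*}\ell\rangle=4-3=1.
\]
Next, $\langle p_{1}(\widehat X)\cdot R\rangle$ is computed by the argument of \cref{prop:Xminus-p1}. Since $R\cong\Pp^{1}\times\Pp^{1}$ has signature $0$ and normal bundle $\mathcal O(-1,-1)$ with square $2$, we get $\langle p_{1}(\widehat X)\cdot R\rangle=0+2=2$. Then \eqref{eq:flop-divisor-relation} yields
\[
\langle p_{1}(\widehat X)\operatorname{Bl}_{C_+}^{*}\ell_{+}\rangle=1+2=3.
\]

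\emph{Step 3 (compute $\ell_{+}\cdot C_{+}$ and conclude).} By \eqref{lem:branch-flop-intersection}, $\operatorname{Bl}_{C_+}^{*}\mathcal L_{+}|_{R}\cong\mathcal O_{R}(0,-1)$. This is trivial on the fibres of $\operatorname{Bl}_{C_+}$, as it must be, and has degree $-1$ along the second factor, which maps isomorphically onto $C_{+}$. Hence $\ell_{+}\cdot C_{+}=-1$. Applying Step 1 to $\operatorname{Bl}_{C_+}$ gives
\[
3=p_{X_+}(\ell_{+})-3(-1),
\]
so $p_{X_+}(\ell_{+})=0$, and therefore $p_{X_+}\equiv0$ on $H^{2}(X_{+})=\Z\ell_{+}$.
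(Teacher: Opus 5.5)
Your proof is correct, and it takes a different route from the paper.

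\textbf{The paper's route.} The paper never computes $p_1(\widehat X)$. It first shows $\chi(X_+,\mathcal L_+)=\chi(X_-,\mathcal L)$, using three inputs:
\begin{itemize}
\item the exact sequence $0\to\operatorname{Bl}_{C_-}^*\mathcal L\to\operatorname{Bl}_{C_-}^*\mathcal L\otimes\mathcal O_{\widehat X}(R)\to\mathcal O_R(0,-1)\to0$;
\item the acyclicity of $\mathcal O_R(0,-1)$;
\item the vanishing of the higher direct images $R^i\operatorname{Bl}_*\mathcal O_{\widehat X}$ for $i>0$.
\end{itemize}
It then applies Hirzebruch--Riemann--Roch with $c_1=0$ on both sides, in the form $24\chi=4\ell^3-p_1\cdot\ell$. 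This gives $4-4=0$ on $X_-$, and since $\ell_+^3=0$ it forces $p_{X_+}(\ell_+)=0$.

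\textbf{Your route.} You compute directly on the common blow-up, and your signs are right. The coefficient $+\pi^*\eta_Z$ in $c_2(\widehat Y)$ is forced by the birational invariance $c_1c_2(\widehat Y)=c_1c_2(Y)$. Your formula $\langle p_1(\widehat Y)\,\pi^*x\rangle=p_Y(x)-3(x\cdot Z)$ also checks out on the blow-up of $\Pp^3$ along a line: both sides equal $1$, since the strict transform of a plane is $\mathbb F_1$, with signature $0$ and cube $1$.

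\textbf{What each approach buys.} Your argument is essentially topological. The $c_1$ terms cancel in $p_1$, so you need neither $c_1(X_\pm)=0$ nor any of the sheaf-theoretic inputs. You also obtain the general Atiyah-flop rule $p_{X_+}(x_+)=p_{X_-}(x)-4\,(x\cdot C_-)$ for every class $x$. The cost is importing the $c_2$ blow-up formula in the cohomology of these non-K\"ahler threefolds. That cost is harmless, since the correction is supported near $R$ and can be checked locally, as you note.

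The paper's route avoids that formula, but it relies on $\ell\cdot C_-=1$. For $d=x\cdot C_-\ge 2$, the successive twists $\mathcal O_R(d-k,-k)$ are no longer all acyclic, and $\chi$ jumps by $(d-d^3)/6$. This is exactly what your formula predicts when combined with $x_+^3=x^3-d^3$, so the two computations are consistent.
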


\begin{proof}
We first compare the Euler characteristics of $\mathcal{L}$ and $\mathcal{L}_{+}$.
Restricting \cref{eq:flop-divisor-relation} to $R$, we have
\[
 \operatorname{Bl}_{C_+}^*\mathcal{L}_+|_R\cong \mathcal O_{R}(1,0)\otimes\mathcal O_{R}(-1,-1)
 \cong\mathcal O_{R}(0,-1),
\]
whose cohomology vanishes in every degree. The divisor exact sequence
\[
 0\longrightarrow \operatorname{Bl}_{C_-}^*\mathcal{L}
 \longrightarrow \operatorname{Bl}_{C_-}^*\mathcal{L}\otimes\mathcal O_{\widehat{X}}(R)
 \longrightarrow\mathcal O_{R}(0,-1)
 \longrightarrow0
\]
therefore gives
\[
 \chi(\widehat{X},\operatorname{Bl}_{C_+}^{*}\mathcal{L}_{+})=\chi(\widehat{X},\operatorname{Bl}_{C_-}^*\mathcal{L}).
\]
For a blow-up along a smooth centre,
\[
 R^{i}\operatorname{Bl}_{C_+,*}\mathcal O_{\widehat{X}}=R^{i}\operatorname{Bl}_{C_-,*}\mathcal O_{\widehat{X}}=0
 \quad(i>0),
\]
and both direct images in degree zero are the structure sheaves. The projection formula consequently yields
\begin{equation*}
 \chi(X_{+},\mathcal{L}_{+})=\chi(X_{-},\mathcal{L}).
\end{equation*}

For the compact complex threefold $X_+$, since $c_{1}(TX_+)=0$, the
Hirzebruch--Riemann--Roch formula gives
\begin{align*}
 \chi(X_+,\mathcal L_+)
 &=\frac{1}{6}\langle l_+^{3},[X_+]\rangle
   +\frac{1}{12}\langle c_{2}(X_+)\cup l_+,[X_+]\rangle\\
 &=\frac{1}{24}
   \bigl\langle 4l_+^{3}-p_{1}(X_+)\cup l_+,[X_+]\bigr\rangle,
\end{align*}
because $p_{1}=c_{1}^{2}-2c_{2}=-2c_{2}$.  The same formula applies to $\chi(X_{-},\mathcal{L})$, hence
\[
 24\chi(X_{-},\mathcal{L})=4\left\langle \ell^{3},[X_{-}]\right\rangle
 -
 \left\langle p_{1}(X_{-})\cup \ell,[X_{-}]\right\rangle
 =4-4=0
\]
by \cref{prop:Xminus-cubic,prop:Xminus-p1}. On $X_{+}$,
$\ell _{+}^{3}=0$ by \cref{prop:flop-cubic}. Hence
\[
 \bigl\langle p_{1}(X_{+})\cup \ell_{+},[X_{+}]\bigr\rangle=0.
\]
\end{proof}

\section{Proof of the main theorems}\label{Proof of the main theorems}

We use Wall's smooth classification theorem for spin six-manifolds
\cite{Wall}. Jupp established the corresponding broader classification of simply connected six-manifolds with torsion-free integral homology, including the non-spin case \cite[Theorem~1 and the corollary on p.~299]{Jupp}.

We now state precisely the form of Wall's result used in this paper.

\begin{theorem}\cite[Theorem~5]{Wall}
\label{thm:wall-specialized}
An admissible spin Wall system in dimension six is a quadruple
\[
       (r,H,\mu,p)
\]
consisting of a nonnegative integer $r$, a finitely generated free
abelian group $H$, a symmetric trilinear form
\[
       \mu\colon H\times H\times H\longrightarrow\mathbb Z,
\]
and a homomorphism $p\colon H\to\mathbb Z$, subject to the
congruences
\begin{align}
 \mu(x,x,y)+\mu(x,y,y)&\equiv0\pmod 2
       &&\text{for all }x,y\in H,\label{eq:wall-parity}\\
 4\mu(x,x,x)-p(x)&\equiv0\pmod{24}
       &&\text{for all }x\in H.\label{eq:wall-index}
\end{align}

For a closed, connected, oriented, simply connected spin smooth real six-manifold $Y$ with torsion-free integral homology, set
\[
 H_{Y}:=H^{2}(Y ),\qquad
 r_{Y}:=\frac{1}{2}\operatorname{rank}H^{3}(Y ),
\]
\[
 \mu_{Y}(x,y,z):=\langle x\cup y\cup z,[Y]\rangle,
 \qquad
 p_{Y}(x):=\langle p_{1}(Y)\cup x,[Y]\rangle .
\]
Then $r_{Y}$ is an integer and
$(r_{Y},H_{Y},\mu_{Y},p_{Y})$ is admissible. The assignment
\[
 Y\longmapsto(r_{Y},H_{Y},\mu_{Y},p_{Y})
\]
induces a bijection from orientation-preserving diffeomorphism classes
of such manifolds to isomorphism classes of admissible systems.
Explicitly, two such manifolds $Y$ and $Y'$ are
orientation-preservingly diffeomorphic if and only if
$r_{Y}=r_{Y'}$ and there is an isomorphism
$\varphi\colon H_{Y}\to H_{Y'}$ satisfying
\[
 \mu_{Y'}(\varphi x,\varphi y,\varphi z)=\mu_{Y}(x,y,z),
 \qquad
 p_{Y'}(\varphi x)=p_{Y}(x)
\]
for all $x,y,z\in H_{Y}$. Conversely, every admissible system is
realized by such a manifold.
\end{theorem}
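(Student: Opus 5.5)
Since \cref{thm:wall-specialized} is quoted from \cite{Wall}, the paper uses it as a black box. If I were to reprove it, I would split the argument into three parts: the necessity of the congruences, uniqueness, and realization.

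\emph{Necessity.} First, $r_Y$ is an integer because the intersection form on $H^3(Y)$ is skew-symmetric and unimodular by Poincaré duality, so its rank is even.

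For \eqref{eq:wall-parity} I would use the Wu formula. On a closed spin $6$-manifold the Wu class $v_2=w_2+w_1^2$ vanishes. Hence $\operatorname{Sq}^2\colon H^4(Y;\Z/2)\to H^6(Y;\Z/2)$ is zero. The Cartan formula gives
\[
 \operatorname{Sq}^2(xy)=x^2y+xy^2 \pmod 2 ,
\]
and this is exactly the required congruence.

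For \eqref{eq:wall-index} I would realize $x$ as $c_1$ of a complex line bundle $L$ and apply the Atiyah--Singer index theorem to the spin Dirac operator twisted by $L$. The index must be an integer, and it equals
\[
 \int_Y \hat A(Y)\operatorname{ch}(L)=\frac{x^3}{6}-\frac{p_1\,x}{24}=\frac{4x^3-p(x)}{24}.
\]
This proves the second congruence.

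\emph{Uniqueness.} First I would split off the middle-dimensional part. Choose a symplectic basis of $H_3(Y)$. By the Whitney trick and Haefliger's embedding theorem, which apply because $Y$ is simply connected and of dimension $6$, these classes are represented by embedded $3$-spheres. Their normal bundles are classified by $\pi_2(SO(3))=0$, so they are trivial, and the basis spheres can be arranged to meet transversely in one point. A neighbourhood of each such pair is a punctured $S^3\times S^3$. This writes $Y\cong Y_0\,\#\,r(S^3\times S^3)$ with $H^3(Y_0)=0$, so it suffices to classify the case $r=0$.

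For $r=0$, I would take a minimal handle decomposition, which exists by Smale's theory since $Y$ is simply connected with free homology. It has only $2$-handles, their dual $4$-handles, and one $0$- and one $6$-handle. So $Y$ minus a ball is a $6$-dimensional thickening $N$ of a wedge of $2$-spheres, i.e.\ a boundary connected sum of $D^4$-bundles over $S^2$, with $4$-handles attached along $\partial N$. The $D^4$-bundles are determined by $w_2$, which is zero here. The attaching data of the $4$-handles lives in the $3$-skeleton of $\partial N$, and I would identify its invariants with the cubic form $\mu$ (the linking of attaching spheres) and the framings, which are measured by $p$ via $\pi_3(SO(3))\cong\Z$. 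The final $6$-handle is attached uniquely because $\Theta_6=0$.

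\emph{Realization.} Run the same construction in reverse. Starting from $(H,\mu,p)$, build $N$, attach $4$-handles with the prescribed linking and framing data, and check that the congruences \eqref{eq:wall-parity} and \eqref{eq:wall-index} are exactly the conditions under which the resulting boundary is a homotopy sphere, hence $S^5$. Then cap with $D^6$ and take the connected sum with $r(S^3\times S^3)$.

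The main obstacle is the uniqueness step for $r=0$. One has to show that the isotopy classes of the attaching $3$-spheres of the $4$-handles in $\partial N\cong\#(S^2\times S^3)$, together with their framings, are determined by $(\mu,p)$ up to handle slides. I would attack this by computing the relevant normal invariants through surgery theory on $\partial N$, where the $L$-groups in these low dimensions make the structure set small. The remaining ambiguity is controlled by $\pi_3(SO(3))\cong\Z$, and this is exactly what $p$ records subject to \eqref{eq:wall-index}.
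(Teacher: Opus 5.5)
The paper gives no proof of this statement. It imports Wall's Theorem~5 as a black box, so your opening sentence matches what the paper does. The necessity half of your sketch is correct and standard: $\operatorname{rank}H^3(Y)$ is even because the cup pairing on $H^3$ is skew and unimodular. Spin gives $v_2=w_2+w_1^2=0$, so $\operatorname{Sq}^2(xy)=x^2y+xy^2$ vanishes in $H^6(Y;\mathbb Z/2)$. Integrality of the index of the Dirac operator twisted by a line bundle with $c_1=x$ gives $\frac{1}{24}\langle 4x^3-p_1(Y)x,[Y]\rangle\in\mathbb Z$. The splitting $Y\cong Y_0\# r(S^3\times S^3)$ is also fine, using that $\pi_3(Y)\to H_3(Y)$ is onto for simply connected $Y$. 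The genuine gap is the classification for $r=0$ and the matching realization step. That is exactly the direction the paper relies on (isomorphic invariants $\Rightarrow$ diffeomorphic), and there your proposed mechanism is misplaced, not merely unfinished.

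A $4$-handle of a $6$-manifold is $D^4\times D^2$ attached along $S^3\times D^2\subset\partial N$. Its framings are therefore classified by $\pi_3(SO(2))=0$, so there is no framing datum for $p$ to measure. Nor does $\pi_3(SO(3))$ account for the leftover freedom. Gluing two copies of $S^2\times D^4$ by $(u,v)\mapsto(\alpha(v)u,v)$ on $S^2\times S^3$, with $\alpha\in\pi_3(SO(3))\cong\mathbb Z$, gives the $S^2$-bundles over $S^4$, which have $(\mu,p)=(k,4k)$ (e.g.\ $\mathbb P^3$ for $k=1$). These twists change the Hopf invariant of the attaching sphere, hence they change $\mu$ together with $p$.

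The freedom that remains once $\mu$ is fixed is invisible to homotopy theory: replacing $p$ by $p+24\varphi$ for any $\varphi\in\operatorname{Hom}(H,\mathbb Z)$ preserves admissibility. Take $H=\mathbb Z$. The class of the attaching sphere in $\pi_3(S^2\times S^3)\cong\mathbb Z^2$ is fixed, up to the evident symmetries, by $\mu$ and by its duality with the $2$-handle. Yet $(\mu,p)$ and $(\mu,p+24)$ are non-isomorphic systems, both realized (by the theorem itself and Smale's minimal handle decompositions) by manifolds with one $2$-handle and one $4$-handle. So $p$ is carried by the isotopy class, inside a fixed homotopy class, of codimension-two embeddings $S^3\hookrightarrow\#_k(S^2\times S^3)$, where knotting occurs.

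A smaller point: ``linking of attaching spheres'' is not the right description of $\mu$. Disjoint $3$-spheres in a $5$-manifold have no linking number; $\mu$ is read off from the Hopf-invariant and Whitehead-product coefficients of the attaching maps in $\pi_3(\bigvee_k S^2)$.

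Classifying these embeddings up to isotopy, handle slides and diffeomorphisms of $N$ is the actual content of Wall's theorem. An appeal to small structure sets of $\partial N$ does not address isotopy of embeddings in it. The realization step needs the same missing input, namely attaching spheres that realize every admissible $p$ with boundary $S^5$. So either cite Wall, as the paper does, or use a genuinely different method. One option is Kreck's modified surgery over the normal $2$-type $B\mathrm{Spin}\times K(\mathbb Z^k,2)$, where $\mu$ and $p$ appear as bordism characteristic numbers.
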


\subsection{The proof of \cref{thm:Xminus-main}}

\begin{proof}[Proof of \cref{thm:Xminus-main}]
By \cref{prop:Xminus-homology}, the manifold $X_{-}$ is simply
connected and has torsion-free integral homology, with
\[
 H^{2}(X_{-} )=\mathbb Z \ell,
 \qquad
 H^{3}(X_{-} )=0.
\]
Thus $r_{X_{-}}=0$.  The calculations in
\cref{prop:Xminus-cubic,prop:Xminus-c1,prop:Xminus-p1} give
\[
 \mu_{X_{-}}(\ell,\ell,\ell)=1,
 \qquad
 w_{2}(X_{-})=0,
 \qquad
 p_{X_{-}}(\ell)=4.
\]

Let $u=c_{1}(\mathcal O_{\mathbb P^{3}}(1))$ be the first Chern class of the standard complex structure on $\mathbb{P}^3$. Then
\[ u^3=1.\qquad
 w_{2}(\mathbb P^{3})=0,
 \qquad
 p_{1}(\mathbb P^{3})
 =c_{1}^{2}-2c_{2}=4u^{2}.
\]
Hence $X_{-}$ and $\mathbb P^{3}$ have isomorphic Wall systems,
the isomorphism being $\ell\mapsto u$.  By
\cref{thm:wall-specialized},
\[
 X_{-}\cong_{\mathrm{diff}}\mathbb P^{3}.
\]

This complex structure is not the standard one.  Indeed,
\cref{prop:Xminus-c1} gives $c_{1}(TX_{-})=0$, whereas the standard
complex structure on $\mathbb P^{3}$ has first Chern class $4u$.

We next compare it with the construction of Huckleberry, Kebekus and Peternell. Given a complex structure $X$ on $S^{6}$ and a point $p\in X$, they consider
\[
 \pi_{p}\colon Z_{p}:=\Bl_{p}X\longrightarrow X
\]
and observe that $Z_{p}$ is diffeomorphic to $\mathbb P^{3}$;
their automorphism theorem then yields a one-dimensional family of such
complex structures \cite[p.~102 and Corollary~1.2]{HKP}.  Let
$E_{p}\cong \mathbb P^{2}$ be the exceptional divisor and put
$e_{p}=[E_{p}]$.  The point-blow-up formula gives
\[
 H^{2}(Z_{p} )=\mathbb Z e_{p},
 \qquad
 e_{p}^{3}=1,
\]
and
\[
 K_{Z_{p}}
 \cong \pi_{p}^{*}K_{X}\otimes\mathcal O_{Z_{p}}(2E_{p}).
\]
Since $H^{2}(X )=0$, it follows that
\[
 c_{1}(TZ_{p})=-2e_{p}\ne0.
\]
Consequently $X_{-}$ is not biholomorphic to any of these point
blow-ups.  Nor are they deformation equivalent through smooth compact
complex manifolds since in a smooth proper holomorphic family, the first Chern class of the relative tangent bundle forms a locally constant class.  A zero first Chern class therefore cannot
deform to $-2e_{p}$ or $4u$.  This proves all the assertions of
\cref{thm:Xminus-main}.
\end{proof}

\begin{proposition}\label{prop: Q3exotic}
The complex threefold $\widetilde X$ is orientation-preservingly diffeomorphic, but not biholomorphic, to the smooth quadric threefold $Q_{3}\subset\mathbb P^{4}$.
\end{proposition}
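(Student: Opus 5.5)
The plan is to follow the same route as the proof of \cref{thm:Xminus-main}: first compute the Wall system of $\widetilde X$, then distinguish the complex structure using $c_1$.

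\emph{Topology.} By \cref{lem:integral-blowup} applied to $\mathrm{Bl}_C\colon\widetilde X\to X_{\mathrm{sph}}$, and using $X_{\mathrm{sph}}\cong_{\mathrm{diff}}S^6$, we get $H^k(\widetilde X)\cong H^k(S^6)\oplus H^{k-2}(\mathbb P^1)$. This is $\mathbb Z$ for $k=0,2,4,6$ and zero otherwise, with $H^2(\widetilde X)=\mathbb Z[E]$. The group is torsion-free, and $r_{\widetilde X}=0$. Simple connectivity follows from the van Kampen argument of \cref{lem:pi1-blowup}: the tube around $C$ and its preimage (which retracts onto $E\cong\mathbb P^1\times\mathbb P^1$) are simply connected with boundary $S^2\times S^3$, and the complement $M$ is simply connected by \cref{lem:exterior}. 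By \cref{lem:exceptional-cubic} the cubic form is $E^3=2$.

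\emph{Spin and Pontryagin.} The blow-up formula gives $K_{\widetilde X}=\mathrm{Bl}_C^*K_{X_{\mathrm{sph}}}\otimes\mathcal O(E)$. Since $H^2(X_{\mathrm{sph}})=0$, this yields $c_1(\widetilde X)=-[E]$, which is odd, so $w_2\neq0$. The manifold is therefore not spin, and Wall's theorem as stated in \cref{thm:wall-specialized} does not apply. Note that $Q_3$ is not spin either: $c_1(Q_3)=3h$ and $w_2=h\bmod2\ne0$. I would handle this in one of two ways.

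\begin{itemize}
\item \emph{Use Jupp's classification} \cite{Jupp}. A simply connected six-manifold with torsion-free homology is classified by $(r,H,\mu,w_2,p_1)$. For $Q_3$ we have $h^3=2$, $w_2=h\bmod2$, and $p_1(Q_3)=c_1^2-2c_2=9h^2-2\cdot 5h^2/\,$appropriately computed, so $\langle p_1h,[Q_3]\rangle=\ldots$ (from $c(Q_3)=(1+h)^5/(1+2h)$: $c_2=4h^2$, hence $p_1=h^2$ and $p(h)=2$).
\item \emph{Compute $p(E)$ on $\widetilde X$} exactly as in \cref{prop:Xminus-p1}. Since $E\cong\mathbb P^1\times\mathbb P^1$ has signature $0$ and $\mathcal N_{E/\widetilde X}$ has $c_1^2=E^3=2$, we get $\langle p_1(\widetilde X)[E],[\widetilde X]\rangle=\operatorname{sign}(E)\cdot3+E^3=2$.
\end{itemize}

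The identification $[E]\mapsto h$ then matches $\mu$, $w_2$ and $p$. Orientation is fine because $E^3=2=h^3$ fixes the sign of the isomorphism ($-h$ would give cubic $-2$). Jupp's theorem then yields $\widetilde X\cong_{\mathrm{diff}}Q_3$.

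\emph{Non-biholomorphic.} Here $c_1(\widetilde X)=-[E]$, whereas $c_1(Q_3)=3h$. Any biholomorphism would pull $c_1$ back to $c_1$, inducing $\pm$ the isomorphism on $H^2\cong\mathbb Z$ and requiring $-1=\pm3$, which is impossible. Equivalently, $Q_3$ is Fano with $h^0(-K)>0$, while $-K_{\widetilde X}=-\mathrm{Bl}_C^*K+(-E)$ has negative degree on the fibres of $E$.

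\emph{Main obstacle.} The main obstacle is the classification step in the non-spin case. This requires invoking Jupp's theorem rather than Wall's theorem, and checking the correct form of its invariants: in the non-spin case the $p_1$ congruence and the $w_2$ data must be compared. The first Pontryagin number computation for $Q_3$ also needs to be done carefully.
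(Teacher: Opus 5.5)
Your proposal is correct and is essentially the paper's proof. You use the same invariants $E^{3}=2$, $w_{2}(\widetilde X)=[E]\bmod 2$, $\langle p_{1}(\widetilde X)[E],[\widetilde X]\rangle=2$, matched against $h^{3}=2$, $w_{2}(Q_{3})=h\bmod 2$, $p(h)=2$ via Jupp's non-spin form of Wall's classification, and the same comparison of $c_{1}(\widetilde X)=-[E]$ (primitive) with $c_{1}(Q_{3})=3h$; your homology and $\pi_1$ check also fills in what the paper leaves implicit. One slip, in a remark you do not need: $\mathcal O_{\widetilde X}(-E)$ has degree $+1$, not negative, on the fibres of $E\to C$, so your parenthetical Fano argument is wrong as stated, but the divisibility argument alone suffices.
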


\begin{proof}
Computing the topology invariants of $\widetilde X$, we have $E^{3}=2, w_{2}(\widetilde X)=[E]\bmod2, \bigl\langle p_{1}(\widetilde X)[E],[\widetilde X]\bigr\rangle=2$. By Wall--Jupp's classification, and comparing to the topology invariants of $Q_3$, we find that $\widetilde X\cong_{\mathrm{diff}} Q_3$.

The first Chern
class of $\widetilde X$ is
\[
c_{1}(T\widetilde X)=-[E],
\]
which is primitive in $H^{2}(\widetilde X )$.  By contrast,
\[
c_{1}(TQ_{3})=3c_{1}\bigl(\mathcal O_{Q_{3}}(1)\bigr)
\]
has divisibility three.  A biholomorphism preserves the first Chern
class and its divisibility in integral cohomology.  Hence
$\widetilde X$ cannot be biholomorphic to $Q_{3}$.
\end{proof}

\begin{remark}\label{rem:point-blowup-flop}

The point-blow-up construction does not yield a second route to $S^{2}\times S^{4}$. Indeed, for $p\notin\{ Q, Q'\}$, let
\[
Z_{p}:=\operatorname{Bl}_{p}(X_{\mathrm{sph}}).
\]
We may choose a double curve $D\subset X_{\mathrm{sph}}$ disjoint from $p$ and perform the Atiyah flop along its strict transform in $Z_{p}$. The resulting complex threefold, however, is not diffeomorphic to $S^{2}\times S^{4}$; its underlying smooth manifold is again $\mathbb{P}^{3}$ (again can be proved by Wall's classification), endowed with an exotic complex structure.

\end{remark}

\subsection{The proof of \cref{thm:Xplus-main}}

\begin{proof}[Proof of \cref{thm:Xplus-main}]
By \cref{thm:homology}, $X_{+}$ is simply connected and its integral
homology is torsion-free, with
\[
 H^{2}(X_{+})=\Z \ell_{+},
 \qquad
 H^{3}(X_{+})=0.
\]
Hence $r_{X_{+}}=0$. By
\cref{prop:flop-cubic,prop:Xplus-spin,prop:Xplus-p1},
\[
 \mu_{X_{+}}(\ell_{+},\ell_{+},\ell_{+})=0,
 \qquad
 w_{2}(X_{+})=0,
 \qquad
 p_{X_{+}}(\ell_{+})=0.
\]

Let $x=\operatorname{pr}_{1}^{*}a$ generate
$H^{2}(S^{2}\times S^{4})$, and let
$y=\operatorname{pr}_{2}^{*}b$ generate its fourth cohomology, with
\[
 \left\langle x\cup y,[S^{2}\times S^{4}]\right\rangle=1.
\]
The Künneth theorem gives
\[
 x^{2}=0,\qquad H^{3}(S^{2}\times S^{4} )=0.
\]
Moreover every sphere is stably parallelizable:
\[
 TS^{n}\oplus\mathbb R\cong\mathbb R^{n+1}.
\]
Consequently
\[
 T(S^{2}\times S^{4})\oplus\mathbb R^{2}
 \cong
 \operatorname{pr}_{1}^{*}(TS^{2}\oplus\mathbb R)
 \oplus
 \operatorname{pr}_{2}^{*}(TS^{4}\oplus\mathbb R)
\]
is trivial. Stable characteristic classes therefore give
\[
 w_{2}(S^{2}\times S^{4})=0,
 \qquad
 p_{1}(S^{2}\times S^{4})=0.
\]
Thus $S^{2}\times S^{4}$ has Wall system
\[
       (0,\mathbb Z,0,0),
\]
and the isomorphism $\ell_{+}\mapsto x$ identifies it with the Wall
system of $X_{+}$. By \cref{thm:wall-specialized}, there is an
orientation-preserving diffeomorphism
\[
       X_{+}\cong_{\mathrm{diff}} S^{2}\times S^{4}.
\]
Since $X_{+}$ is a complex threefold by the flop construction in
\cref{def:Xplus-flop}, this proves \cref{thm:Xplus-main}.
\end{proof}

\section{Further discussion}\label{Further discussion}

\subsection{Dependence on the flopping curve}

Retain the labelling of \cite[Proposition~4.6(iii)]{S6Paper}, so that
the three cusp double curves correspond to the unoriented directions
$e_{1},e_{2},e_{1}-e_{2}$, and the fixed curve is the one of direction
$e_{2}$ by \cite[Proposition~9.24]{S6Paper}.  After blowing up the fixed
curve and taking the $\mu _2$-quotient, the strict transforms of the
other two double curves give disjoint curves
\[
 C_{-},C_{-}'\subseteq X_{-},\] with normal bundles $\mathcal O_{\mathbb P^{1}}(-1)^{\oplus2}$.
Thus either curve could be flopped. The same Wall-invariant calculation shows that both resulting complex manifolds are diffeomorphic to $S^2\times S^4$.

This local toric symmetry does not by itself extend to the completed
global family.  The connected automorphism group computed in
\cite[Proposition~9.23]{S6Paper} is the vertical $\mathbb C^{*}$, and
its toric action preserves each orbit closure, hence each double curve
separately; see the proof of \cite[Proposition~9.24]{S6Paper}.
We therefore do not yet know whether the two resulting complex manifolds are
biholomorphic, or even deformation equivalent. We leave this as a
natural question for further study.

\subsection{The role of the source construction}

The construction of $X_{\mathrm{sph}}$ in \cite{S6Paper} is extensive:
it develops the period family, the toric cusp filling, the two logarithmic
transforms, the integral topology, and a detailed analysis of the
automorphism group.  Reading and checking the entire construction is
correspondingly demanding.  Once its existence and diffeomorphism
theorems are taken as input, not every one of those calculations is
logically required for a direct proof of \cref{thm:Xplus-main}.

The additional material is nevertheless geometrically fruitful.  In
particular, the vertical $\mathbb C^{*}$-action, its fixed curve, the
three double curves of the cusp fibre, and the explicit $A_{2}$-fan are
precisely the data that reveal the quotient resolution and the floppable
curve used here.  These features are invisible in the bare statement
that $S^{6}$ carries a complex structure.  Thus the breadth of the
source construction should not be regarded merely as redundancy: its
more exploratory calculations expose birational geometry that suggests
new constructions.  It would be interesting to determine whether other
parts of that geometry lead to further compact complex manifolds or to
new complex structures on familiar smooth six-manifolds.

Recently, the main result of \cite{S6Paper} has been claimed to be formalized by Lean 4 by Boris Alexeev. See \href{https://github.com/plby/HopfProblem}{https://github.com/plby/HopfProblem}.

\begingroup
\bibliographystyle{abbrv}
\bibliography{references}
\endgroup

\end{document}